\documentclass[11pt]{amsart}
\usepackage{graphicx,color}
\newtheorem{theorem}{Theorem}[section]
\newtheorem{lemma}[theorem]{Lemma}
\newtheorem{proposition}[theorem]{Proposition}
\newtheorem*{proposition*}{Proposition}
\newtheorem*{theorem*}{Theorem}
\newtheorem*{lemma*}{Lemma}
\theoremstyle{definition}
\newtheorem{definition}[theorem]{Definition}
\newtheorem{example}[theorem]{Example}

\theoremstyle{remark}
\newtheorem{remark}[theorem]{Remark}
\usepackage[english]{babel}
\usepackage[T1]{fontenc}
\usepackage{ifthen}
\usepackage{mathrsfs}
\usepackage{psfrag}
\numberwithin{equation}{section}

\begin{document}
\title{PBNF-transform as a formulation of Propositional Calculus, II}
\author{Pelle Brooke Borgeke}
\address{Linn\oe us university}
\curraddr{}
\email{pelle.borgeke@lnu.se}
\subjclass[2010]{Primary}
\keywords{Operators, Normal Forms, Dualization, Boolean Polynomials, Polynomial calculus}
\date {9 Februay 2026}
\begin{abstract}Here we show, in the second paper in a series of articles, methods to calculate propositional statements with algebraic polynomials as \emph{symbols} for the connectives, which here are named \emph{operators}. In the first article, we explained this formulation of the Propositional Calculus. In short, we transform  to a dual space, which we here refer to as a polynomial family, which is another shape of DBNF. We name the polynomial families as PBNF, which stands for \emph{Polynomial Boolean Normal Form}. We just use the one law of inference, the rule of Substitution. We can use different polynomial families in the \emph{House} of PBNF, depending on the statement form, making it even simpler. It is also possible to find new theorems and generalize older ones, for example, those given by Church and Barkley Rosser (see follow-up article) concerning duality. 
\end{abstract}
\maketitle
\section{Introduction}
We shall here use Polynomial Boolean Normal Forms, short PBNF, for a mathematical formulation of the \emph{classical} Propositional Calculus (with just 2 truth values, 1 or 0), which we call the PBNF-transform. This is the second article in a series. The method is based solely on Boolean polynomial algebra modulo 2 and uses only one rule of inference, the Rule of Substitution. We consider here the arguments for the transformation to the polynomial families as a \emph{Space of Operators}, $\mathcal{OP}$, which consists of classes of operators, null-class $(p,q$ operands or letters), singular och binary operators and compound statements, which we name forms $(S_{c(k,l}))$, where $c$ stands for classical, $k$ the number of connectives, and $l$ the number of letters. The PBNF transformation could be done in different families in the \emph{House} of PBNF to fit the argument so that we can calculate, in a trivial fashion, the statement with algebraic methods.  In the \emph{protocol}(rules and conditions) below, we outline how it is done in the quadratic case. ($[\cdot]$:=exists, |:=XOR)  \begin{align}[f][\mathcal{H}][a_j][\text{T|F}][p_j,q_k] \bigr(f:\mathcal{OP}\to (\mathbb{Z}_2 | \mathcal{H}\in \mathbb{Z}_2 ),f(op_{\boldsymbol{x}}):= g_{(p_j,q_k,a_0)}(op_{\boldsymbol{x}})\\= a_1pq+a_2p+a_3q+a_4 =(1|0, \big{|}g(p_j,q_k,a_0));
p|q+a =(1|0, \big{|}g(p_j|q_k,a);\\ \mapsto (1|0=\text{(T|F)|(F|T)}), \\ p_j,q_k\in \mathcal{S}=(p,q,p',q',\Leftrightarrow, |;  \bigl). \end{align}
We observe that the range of $f$ is real valued $(1\land 0) \in \mathbb{Z}_2$ and also in $g \in \mathcal{H} \in \mathbb{Z}_2$. If we do not reach a tautology (1) or contradiction(0), we get a polynomial $(a_1pq+a_2p+a_3q+a_4)(0|1)$ which can be evaluated to give either 0 or 1. We can, of course, use the polynomials to pull back to the operator space, applying the inverse $g^{-1}((a_1pq+a_2p+a_3q+a_4))$ as $g$ is a bijection for linear and quadratic polynomial. \footnote{For a cubic polynomial, we have a weak bijection for a couple of interesting ternary operators. Se first article [3].} 
The material is divided as follows: In Section 2, we give a brief look-back at the first article and prove the three other standard binary PBNF families. In the next section 3, we study the generators for the polynomials and compare them with the primitive connectives in standard mathematical logic. Section 4 concentrates on the singular operators, of course using PBNF, with some results in special topics, such as the demi negation [6,11] where we show a way to make a canonical decomposition of the negation. We also study more advanced theories of logic.

\section{Short Resume, and Proof of three more binary PBNF-families}
We start with a summary in eight Paragrafs of our formulation PBNF so far. We also add some common statements for Boolean algebra, to collect this one place.  Therafter we introduce and prove the other three polynomial families.
\textbf{1§.}  We call $x_1,x_2 \ldots , x_n, n\in \mathbb{N} $ for Boolean expressions, with values $x_n \in \mathbb{Z}_{0,1}$. Also $\left[{x_1 x_2} \atop {x_3 x_4} \right],\left[{x_1 x_2} \atop {x_3 x_4}  \right] \ldots$, and $(x_1x_2x_3x_4), (x_1x_2x_3x_4), \ldots$ are Boolean expressions, with Boolean entries/units.
\textbf{2§.}  The Boolean Polynomial Normal Form - PBNF is a canonical transformation of DBNF into families of Boolean polynomials $g(p_j,q_k,a_0) \in \mathcal{H}$ of the form $g(p_i,q_j,a_0)=a_1pq+ a_2p+a_3q+a_4; a_{0 \leq k \geq 4}=0|1$. They can also be represented by $\left[{g_1 \ g_2} \atop {g_3  \ g_4}  \right].$
\textbf{3§.} If $g_j$, $g_k \ldots$, are Boolean polynomials then $g_j g_k\ldots$ and $g_j + g_k + \ldots$ are Boolean polynomials.
\textbf{4§.} If $g$ is a Boolean polynomial then $g'=g+1$ is  a boolean polynomial.
\textbf{5§.} A multiplication and an addition are defined that follow the rules of $\mathbb{Z}_2$ so that $g^2=g$ (the collapsing equation) and $g+g=0$ (the annihilating equation). We have ,for example $$(g_1+g_2)^2=g_1^2+2g_1g_2+g_2^2 \Leftrightarrow g_1+g_2=g_1+2g_1g_2+g_2$$ $$ \Leftrightarrow 2g_1g_2=g_1g_2+g_1g_2=0.$$
Also, a Boolean matrix multiplication can be used to define the polynomials needed in PBNF from $(p,q,1)$ ,for example
$$\left[{p \ p} \atop {q \ 1}  \right] \cdot \left[{q \ p} \atop {p  \  1}  \right]= \left[{p(q+1) \ 0} \atop {(p+q) \ (pq+1)}  \right]  \mapsto (p(q+1),0,p+q,pq+1) \hookrightarrow (\not \Rightarrow,0,|\uparrow) $$
$\textbf{6§.}$ The polynomials are not uniquely written as $g=-g$  for example the linear polynomial $g=p+1=p-1$, because if we add 1 on both sides $p+1+1=p-1+1 \Leftrightarrow  p=p.$ This has no effect on PBNF as the changes in writing could be corrected, and here we do not use the minus sign in the calculus.
$\textbf{7§.}$  The mapping from the range(R) of operator space $\mathcal{OP}$ (that is the boolean expression $x_n$ in $\boldsymbol {1.}$ ) to $\mathcal{H}$ $$g:R(\mathcal{OP}) \rightarrow \mathcal{H}$$ form a bijection from the  $x_n, n=1,2,\ldots,n=0/1$.
The mapping $f$ in the singular case $(j=1)$ is defined by \begin{equation} f(x_1,x_2)= (0, p, p+1,1) \end{equation} in a boolean group $\mathcal{B}_1= \langle 0, p, p+1,1;+ \rangle.$ In the binary case, we have a Boolean polynomial matrix ring \begin{align} \mathcal{B}_2=\langle 0,\left[{0 0} \atop {0 0} \right]p, \left[{1 1} \atop {0 0}  \right]q , \left[{1 0} \atop {1 0}  \right]p+1,\left[{0 0} \atop {1 1}  \right] q+1,\left[{0 1} \atop {0 1}  \right] p+q, p+q+1,\left[{1 0} \atop {0 1}  \right]\\  (p+1)q,(p+1)q+1, p(q+1), p(q+1)+1,\\  (p+1)(q+1),(p+1)(q+1)+1, pq,\left[{1 0} \atop {0 0}  \right] pq+1, 1;+, \cdot,\rangle. \end{align} The mapping $g^{-1}:\mathcal{H} \longrightarrow \mathcal{OP} $ is the $\emph{fiber}$ defined by the inverse image of $g(\boldsymbol {x}).$   
$\textbf{7§.}$ We also proved the first family, which we called the Normal family, $\mathcal{H}^*$. We do not use $\mathcal{OP}^*$ because it refers to the dual in the operator space.
 \begin{tabular}{l l l}
$N(op_{\boldsymbol{x}})$ & Number string in $g(p,q,1)$ & Symbol $\in \mathcal{OP}^* $\\
\hline
$(1) N(\wedge)=1000$ &$(1100)(1010)=(1000)$& $pq$ \\
$(2_p)N(p,\neg p) $&$(1100),(0011)$ &$p,p+1$\\
$(2_q)N(q,\neg q) $&$(1010),(0101)$ &$q,q+1$\\
$(3) N(\vee)=1110$&$1100+1010+1000=1110$ &$(p+1)(q+1)+1$ \\
$(4) N(\Rightarrow)=1011$ &$1000+1100+1111=1011$&$p(q+1)+1$\\
$(5) N(\Leftarrow)=1101$ &$1000+1010+1111=1101$&$(p+1)q+1$\\
$(6) N(\Leftrightarrow) =1001$ &$(1011)(1101)=1001$ &$p+q+1$ \\
$(7) N(\uparrow)= 0111$ &$(1100)(1010)+1111=0111$ &$pq+1$. \\
$(8) N(\downarrow)=0001$ & $1110+1111=0001$ &$(p+1)(q+1)$. \\
$(9) N(|)=0110$ & $1100+1010=0110$ &$p+q$. \\
\end{tabular}  We notice the use of the building blocks $(0, p, q, p+1, q+1,1)$ that use only + and $\cdot$ in $\mathbb{Z}_2$ to fill the table. By adding 1, you negate the operator $ +1 \leftrightarrow \neg op_x$, by adding $p, q$ and $1$ you add component wise $p+q+1=(1,1,0,0) +(1,0,1,0)+(1,1,1,1)=(0,1,1,0)+(1,1,1,1)=(1,0,0,1)$ with $1+1=0$ which gives $ \Leftrightarrow, $ or you can use multiplication as above.
Now we collect some common attribute for Boolean algebra (Here $\mapsto^*$ is transforming to $g(p,q,1))$ \begin{lemma}
1. $\emph{Identity}$ for $\lor:$
$$(p \lor 0=p) \mapsto^* (p+1)(0+1)+1=p, \quad g^{-1}(p)=p$$
for $\land$: $$ (p \land 1 =p) \mapsto^* pq=p1=p $$
2. $\emph{Annihilator}$ for $\lor$: $$ (p \lor 1=1) \mapsto^* (p+1)(1+1)+1=1 $$
for $\land$: $$ (p \land 0=0) \mapsto^* pq=p0=0 $$
3. $\emph{Idempotence}$  for $\lor$: $$ (p \lor p =p) \mapsto^* (p+1)(p+1)+1=p $$
for $\land$: $$ (p \land p=p) \mapsto^* pp=p $$
4. $\emph{Absorption}$  for $\lor$: $$ (p \lor (q \land p)) \mapsto^* (p+1)(pq+1)+1=pq+p+pq+1+1=p  $$ for $\land$: $$ p \land (q \lor p)=p   \mapsto^*  p[(p+1)(q+1)+1]=p .$$
5. For 1 and 0, we find $$ (1\lor 1=1),  (1\land 1=1), (1\lor 0=1), (1\land 0=0),  (0\lor 0=0), (0\land 0=0) $$ 
We also look at the conditional here
6. $(1 \Rightarrow 0) \mapsto p(q+1)+1 = 0$, but
$(0 \Rightarrow 1) \mapsto p(q+1)+1 = 1$ and $(0 \Rightarrow 0) \mapsto p(q+1)+1 = 1$ so whenever $p=0$  the statement is true. Finally $(1 \Rightarrow 1) \mapsto 1(1+1)+1=1$ \end{lemma}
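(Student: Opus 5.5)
The plan is to verify each item of the lemma by direct computation in the Normal family $\mathcal{H}^*$. I take as given the symbol table of $\textbf{7§.}$: $\lor \mapsto (p+1)(q+1)+1$, $\land \mapsto pq$ and $\Rightarrow \mapsto p(q+1)+1$. The only algebraic rules needed are those of $\textbf{5§.}$, namely the collapsing equation $g^2=g$ and the annihilating equation $g+g=0$ in $\mathbb{Z}_2$. The single inference rule, Substitution, is used to replace the second operand $q$ by the constant or letter appearing in each law: $0$, $1$, $p$, or a compound such as $q\land p \mapsto pq$. Each law then reduces to a polynomial identity. For items 1–4, the fact that $g$ is a bijection (see $\textbf{7§.}$) lets us pull the resulting polynomial back via $g^{-1}$ and read off the operator statement.

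\textbf{Items 1–3.} I substitute into the two binary polynomials and expand.

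For $\lor$:
\begin{itemize}
\item Substituting $q=0$ gives $(p+1)\cdot 1+1=p$, which is identity.
\item Substituting $q=1$ gives $(p+1)\cdot 0+1=1$, which is the annihilator.
\item Substituting $q=p$ gives $(p+1)^2+1=(p+1)+1=p$ by collapsing, which is idempotence.
\end{itemize}

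For $\land$, the same substitutions give $p\cdot1=p$, $p\cdot0=0$ and $p^2=p$.

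\textbf{Item 4 (absorption).} This is the only step needing a genuine nested substitution.
\begin{itemize}
\item For $\lor$: substitute $q \mapsto pq$ for $q\land p$ into the $\lor$-polynomial. This gives $(p+1)(pq+1)+1 = p^2q+p+pq+1+1$. Collapsing $p^2q=pq$ and annihilating $pq+pq=0$ and $1+1=0$ leaves $p$.
\item For $\land$: $p[(p+1)(q+1)+1]$ expands to $p(pq+p+q+1+1)=p(pq+p+q)=pq+p+pq=p$.
\end{itemize}

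\textbf{Items 5 and 6.} These are evaluations on the four points of $\{0,1\}^2$, taken from the columns of the truth table.
\begin{itemize}
\item For item 5, I plug $(p,q)\in\{(1,1),(1,0),(0,0)\}$ into $pq$ and $(p+1)(q+1)+1$.
\item For item 6, I plug all four pairs into $p(q+1)+1$. The observation "whenever $p=0$ the statement is true" follows immediately, since $p=0$ kills the product and leaves $1$. Only $(p,q)=(1,0)$ gives $1\cdot1+1=0$.
\end{itemize}

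\textbf{Main obstacle.} The main point of care is item 4. It must be justified that substituting the compound polynomial $pq$ for the letter $q$ is legitimate, which holds because $\mathcal{H}$ is closed under $+,\cdot$ by $\textbf{3§.}$. The expansion also needs both reduction rules of $\textbf{5§.}$ applied in the right order. Everything else is routine evaluation.
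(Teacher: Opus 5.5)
Your proposal is correct and takes the same route as the paper, whose proof only notes that everything is computed directly in the Normal family $\mathcal{H}^*$. Changing families gains nothing for these simple mixed $\land/\lor$ statements. Your substitutions and reductions via $g^2=g$ and $g+g=0$ reproduce exactly the computations displayed in the lemma.
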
 \begin{proof} 
Here we used the first normal family $\mathcal{H}^*$, as there are a mix of $\land, \lor$ nothing is gained by changing families and the statements are here also simpel $S_{c(1,2)}.$ \end{proof} In the first article, we proved statements, which we denote $S_{c(k,l)}$, where $k$ is the number of connectives and $l$ is the number of letters in the statement. \begin{example} We have $S_{c(k,l)}: p\land q= \lnot(p \Rightarrow \lnot q)$ and we transform(PBNF): $$g_{(p,q,1)}(S_{c(k,l)})=(pq= g(\lnot)p(q^S+1)+1=g(\lnot)(p(q+1+1)+1)$$  $$=g(\lnot)pq+1=pq+1+1=pq.$$ \end{example} We also compared statements to see if they gave samme or different polynomial to check for equality.
The polynomial could also be used to eliminate connectives. We have already discussed that the conditional $p\Rightarrow q = \lnot p \lor q,$ but if we start by looking att the polynomial for the conditional $p(q+1)+1$, we just look in the table for something that looks alike, and find $(p+1)(q+1)+1$ for the disjunction and we fix this by adding 1 to $p$.
The first line in the table below is just the example we took above concerning the conjunction. Instead of this calculation, we just look at a polynomial that we can transform. The one in this case is the conditional $p(q+1)+1$. We need two changes, negating $q$, and negating the statement to get $\hookrightarrow(pq)=p \land q.$
\begin{tabular}{l l l}
\hline
Statement and polynomial & Alternative polynomial & Alternative statement \\
\hline
$(1) (p\land)q \mapsto pq $ &$p(q+1)+1$ & $\lnot (p\Rightarrow\lnot q)$    \\
$(1') (p\land)q \mapsto pq $ &$pq+1$ & $\lnot(p\uparrow q)=(p\uparrow q) (p\uparrow q)$    \\
$(1'') (p\land)q \mapsto pq $ &$p(q+1)$ & $( p \not\Rightarrow \lnot q)$   \\ 
$(1''') (p\land)q \mapsto pq $ &$(p+1)q$ & $(\lnot p \not \Leftarrow q)$ \\ 
\end{tabular}
Here, we just tried the conjunction to show how it is done. We will come back to non-conditional both ways, as a type of generator of the connectives. 
Now that we have summarized the basis of the system and recalled some items from the first article, we will expand on this by using the idea of extracting the polynomial from its complement: we add 1 everywhere (the result is called the complement space!).  \begin{theorem}
\begin{tabular}{c r l}
$(op_{\boldsymbol{x}})\in \mathcal{OP}$ & \multicolumn{2}{c}{Symbol} $\in  \mathcal{OP}'g(p,q,1'=0) $\\
\hline
$(1) (p \wedge q)$ & $pq+1$ \\
$(2) (p,\neg p $& $p+1,p$\\
$(3) (p \vee q )$ & $(p+1)(q+1)$ \\
$(4) (p \Rightarrow q)$ & $p(q+1)$\\
$(5) (p\Leftrightarrow q)$ & $p+q$ \\
$(6) (p|q)$ & $pq$. \\
\end{tabular} \end{theorem}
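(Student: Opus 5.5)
The plan is to obtain the complement family directly from the Normal family $\mathcal{H}^*$ of the table in 7§. The complement space is built by adding $1$ everywhere, i.e.\ replacing each truth value by its complement ($1'=0$). Concretely, the map sends a number string $(x_1x_2x_3x_4)$ to $(x_1+1,x_2+1,x_3+1,x_4+1)$. On the level of polynomials this is $g\mapsto g+1$, which is legitimate by 4§. By 5§ ($g+g=0$) it is an involution, so it is a bijection of $\mathcal{B}_2$ onto itself. Hence the assignment $op_{\boldsymbol{x}}\mapsto g^*(op_{\boldsymbol{x}})+1$ is again a bijection from $R(\mathcal{OP})$ to the polynomial ring, as required in 7§. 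This shows that $\mathcal{OP}'$ is a genuine PBNF-family and not merely a relabelling.

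Next I verify the table row by row, reading off $g^*$ from 7§ and adding $1$, simplifying with 5§.
\begin{itemize}
\item For (1): $pq\mapsto pq+1$.
\item For (2): $p,\ p+1 \mapsto p+1,\ p$.
\item For (3): $(p+1)(q+1)+1\mapsto (p+1)(q+1)$.
\item For (4): $p(q+1)+1\mapsto p(q+1)$.
\item For (5): $p+q+1\mapsto p+q$.
\item For (6): $p+q\mapsto p+q+1$.
\end{itemize}

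Row (6) is where I expect the main obstacle. Adding $1$ to $p+q$ gives $p+q+1$, not the listed $pq$. So I must fix the intended reading of the symbol in row (6) before claiming the table. My first attempt is to check the number strings directly. In the complement space the basic letters themselves become $p'=(0011)$ and $q'=(0101)$, and their product is $(0001)$. Under the complement convention, $(0001)$ reads back (adding $1$) as $(1110)$, which is not $|$ either. So I will instead try to interpret row (6) via the matrix-product description of 5§ or the $\uparrow/|$ notation clash. The entry $pq$ is exactly the complement of $N(\uparrow)=pq+1$, so I suspect "$|$" in row (6) denotes the Sheffer stroke (NAND) rather than XOR. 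If so, the row follows like the others: $pq+1\mapsto pq$.

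Finally, I will confirm each entry against the truth tables (1 for F in the complement space). I evaluate each polynomial at the four points $(p,q)\in\{(1,1),(1,0),(0,1),(0,0)\}$ and check that it gives the bitwise complement of the DBNF string in 7§. This is a routine four-point check per row, using the Lemma's evaluations, e.g.\ item 6 for the conditional. I will note explicitly the convention adopted for $|$ in row (6).
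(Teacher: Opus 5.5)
Your route is the paper's own: the complement family is obtained by adding $1$ to each $\mathcal{H}^*$ polynomial (``we add 1 everywhere''), i.e.\ $g\mapsto g+1$, and rows (1)--(5) come out exactly as you compute them. You are also right about row (6): with $|$ read as XOR (as in 7\S) the complement-space symbol is $p+q+1$, which is what the paper's own full table lists under $\mathcal{H}'$, whereas $pq$ is the $\mathcal{H}'$ entry for the Sheffer stroke $\uparrow$ (which the paper's later Remark also writes as $|$, e.g.\ $(p|p)\mapsto pp+1$), so the row holds only under the NAND reading you adopt.
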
 The polynomials look easier now; we have to negate statements by adding 1. Observe in the example below that we do not use this list for the sub-statements; rather, it negates the compound statement. \begin{example} We prove one of the laws of De Morgan $p\land q = \neg(\neg p\lor \neg q).$ We write the statement as an equivalence
$$ (p\land q) \Leftrightarrow(\neg(\neg p\lor \neg q))\mapsto_{g(p,q,0)} p^S+q^S=pq+pq=0.$$
Of course we see immediately that LS=RS with $g(p,q,1)$ $$pq=((p+1+1)(q+1+1)+1)+1=pq.$$ The example shows that you could use several roads to the goal. \end{example} We shall now prove the existence of the other polynomial families we use.\begin{theorem}
\begin{tabular}{ c c c c }
$ \mathcal{OP}$ & $ \mathcal{H}^{''} g(p',q',1)$& $ \mathcal{H}^{**}g(p',q',0)$\\
\hline
$ p$ & $p+1(0,0,1,1)$ &$p+1(0,0,1,1)$\\
$ q $  & $q+1(0,1,0,1)$ &$q+1(0,1,0,1)$\\
$ \neg p$ & $p$ &$p+1$\\
$ p \wedge q$ &$(p+1)(q+1)$ &$(p+1)(q+1)+1$\\
$ p \vee q $ & $pq+1$ &$pq$ \\
\hline
$ p \Rightarrow q$ & $(p+1)q+1$&$(p+1)q$\\
$ \lnot p\Rightarrow \lnot q $ & $(p+1)q+1$ &$p(q+1)$  \\
$  p \Leftarrow q $ & $p(q+1)+1$&$p(q+1)$ \\
$ \lnot p \Leftarrow \lnot q$ & $(p+1)q+1$ &$(p+1)q$ \\
\hline
$ p\Leftrightarrow q$ & $(p+q)+1$ &$p+q$\\
$ p\uparrow q$ & $(p+1)(q+1)+1$&$(p+1)(q+1)$ \\
\end{tabular} \end{theorem}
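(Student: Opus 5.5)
The plan is to obtain both new families from the Normal family $\mathcal{H}^*$ by composing the Normal map $g(p,q,1)$ with two elementary maps on the Boolean polynomial ring $\mathcal{B}_2$ of 7§:
\[
\sigma:\ g(p,q)\mapsto g(p+1,q+1), \qquad \tau:\ g\mapsto g+1 .
\]
I take $\mathcal{H}''=g(p',q',1)$ to be $\sigma\circ g(p,q,1)$, and $\mathcal{H}^{**}=g(p',q',0)$ to be $\tau\circ\sigma\circ g(p,q,1)$. This mirrors the way the complement family $g(p,q,1'=0)$ of the preceding Theorem was obtained by adding $1$ everywhere.

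\textbf{Step 1: $\sigma$ and $\tau$ are bijections, and $\sigma$ respects the operations.}
\begin{itemize}
\item $\sigma$ is a ring automorphism of $\mathcal{B}_2$. It is a substitution, so it preserves $+$ and $\cdot$. By the annihilating equation of 5§, $(p+1)+1=p$, so $\sigma$ is an involution.
\item $\tau$ is an involution, since $g+1+1=g$.
\item Hence both composites are bijections from $R(\mathcal{OP})$ onto $\mathcal{B}_2$, because $g(p,q,1)$ already is one by 7§. Each new family is therefore a legitimate PBNF-family with inverse $g^{-1}\circ\sigma$, respectively $g^{-1}\circ\sigma\circ\tau$.
\item Since $\sigma$ is a ring map, it commutes with substitution into compound statements $S_{c(k,l)}$. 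Calculations may therefore be performed inside $\mathcal{H}''$ exactly as in $\mathcal{H}^*$, using only the Rule of Substitution.
\item For $\mathcal{H}^{**}$ one only has to remember that negating a sub-statement is still "add $1$". The outer $\tau$ is applied once, to the whole statement, as in the De Morgan example.
\end{itemize}

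\textbf{Step 2: compute the table entries.} Each row comes from substituting into the Normal-family table:
\begin{itemize}
\item $pq\mapsto (p+1)(q+1)$;
\item $(p+1)(q+1)+1\mapsto pq+1$;
\item $p(q+1)+1\mapsto (p+1)q+1$;
\item $(p+1)q+1\mapsto p(q+1)+1$;
\item $p+q+1\mapsto p+q+1$;
\item $pq+1\mapsto (p+1)(q+1)+1$;
\item letters $p\mapsto p+1$, and $\neg p\mapsto p$.
\end{itemize}
Adding $1$ to each result gives the $\mathcal{H}^{**}$ column. As a cross-check I would verify each entry on the number strings. For example, $\sigma$ turns $(1100)$ into $(0011)$, matching the strings $(0,0,1,1)$ and $(0,1,0,1)$ written in the table.

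\textbf{Main obstacle: consistency of conventions.} The difficulty is not computational but one of convention: one must decide whether the outer complement $\tau$ in $\mathcal{H}^{**}$ also acts on bare letters and on sub-statements. The computation suggests two points to flag.
\begin{itemize}
\item The letter rows are given as $p+1$ in both columns. I read this as "letters are substituted by $\sigma$ only, and $\tau$ acts on connectives". Under this reading the letter rows are correct, but the $\neg p$ row of $\mathcal{H}^{**}$ is then $p+1$ and has to be understood the same way.
\item For $\neg p\Rightarrow\neg q$, the rule predicts $p(q+1)+1$ in $\mathcal{H}''$. This agrees with the $\mathcal{H}^{**}$ entry $p(q+1)$, so the displayed $(p+1)q+1$ would have to be corrected or rechecked by recomputing the truth string.
\end{itemize}
I would settle this convention first and then run the row-by-row check. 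Since every row is a single substitution, the rest is routine.
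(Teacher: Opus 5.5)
This is essentially the paper's own proof: it obtains $\mathcal{H}''$ by switching to the complemented selectors $(0,0,1,1),(0,1,0,1)$, i.e.\ substituting $p+1,q+1$ into $g(p,q,1)$ (the mirror $x_4x_3/x_2x_1$ of the matrix pair), and $\mathcal{H}^{**}$ by then adding $1$ to the whole expression, which is exactly your $\sigma$ and $\tau\circ\sigma$. Both of your flags are well founded: the paper's later complete table gives $p(q+1)+1$ for $\neg p\Rightarrow\neg q$ in $\mathcal{H}''$ (it must agree with $p\Leftarrow q$), and the letter rows here list the selectors, whereas that complete table applies the outer complement and gives $p$ in the $\mathcal{H}^{**}$ column.
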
 \begin{proof} Let us now look upon the $\boldsymbol {x_n}$ change in the expression of a matrix-pair as we introduced in the first article. We can use the matrices as a Boolean expression; most of the binary operators function as well in this shape, but operators that have one or two entries with 1 on the anti-diagonal do not work (anti-diagonal matrices turn to change the order as the transpose). We also use them to visualize certain aspects, such as mirror images and other mappings, as follows.
\begin{equation}\left\{  
\begin{array}{ll}
 \boldsymbol{x}_{*}\mapsto (\begin{smallmatrix}x_1 & x_2\\x_3 & x_4\\ \end{smallmatrix})\\
\boldsymbol{x}_{'}\mapsto (\begin{smallmatrix}x_1' & x_2'\\x_3' & x_4'\\ \end{smallmatrix})
\end{array}\right.
\end{equation} This means that the interpositions are the same, but the character has changed of the entries: $\boldsymbol{x}_{*}$ has been turned to it*s complement $\boldsymbol{x}_{'}.$ If we want to change the interpositions, we must first change the Selectors. The simplest way to do this is to take the complement of the Selectors so we get the $1^{st}$ Complement $(0,0,1,1)$ and the $2^{nd}$ Complement $(0,1,0,1)$ as selectors. This will give us a new matrix pair if we also consider negating as before
\begin{equation}\left\{  
\begin{array}{ll}
 \boldsymbol{x}_{''}\mapsto (\begin{smallmatrix}x_4 & x_3\\x_2 & x_1\\ \end{smallmatrix})\\
\boldsymbol{x}_{**}\mapsto (\begin{smallmatrix}x_4' & x_3'\\x_2' & x_1'\\ \end{smallmatrix})
\end{array}\right.
\end{equation}
We see that we have a mirror map for the selectors, and if we even take the complement, we arrive at the pullback complement family $\mathcal{H}^{**}.$ We get the table by adding one for $p$ and one for $q$ in $g(p',q',1)$ (the pullback family) and one for the expression in $g(p',q',1')$ stands for and as $1' \mapsto 1+1=0$, we even use $g(p',q',0)$, not doing anything with $\neg$ as $((p+1)+1)+1=p+1$. 
For uniqueness, we already have a comment on that. The polynomial families can be written in general normal form; that is, they do not have any reducible parts left, as they satisfy the axioms of the real numbers. For example, $g(p,q,1)(p\lor q)=(p+1)(q+1)+1$ is not written in general normal form as it reduces to $pq+p+q$. The first expression is used to facilitate the calculations. \end{proof} We now give the complete table of the polynomials that can be found also in our first article.
\begin{scriptsize}
\begin{center}
\begin{tabular}{||c | c |c |c |c ||}
\hline
$OP$ & $\mathcal{H}^*g(p,q,1)$&$\mathcal{H}'g(p,q,0)$ & $\mathcal{H}^{''}(p',q',1)$ & $\mathcal{H}^{**}(g(p',q',0)$ \\ [0.5ex]
\hline\hline
$ p $ & $p(1,1,0,0)$ &$p'$ & $p'$& $p$\\ 
\hline
$ p' $ & $p'(0,0,1,1)$ &$p$ & $p$& $p'$\\
\hline
$ q $ & $q(1,0,1,0)$ &$q'$ & $q'$& $q$\\
\hline
$ q' $ & $q'(0,1,0,1)$ &$ q$ & $q$& $q'$\\
\hline
$\neg p $ & $p+1$ &$p$ & $p$& $p+1$\\
\hline
$p\vee q $& $(p+1)(q+1)+1$ &$(p+1)(q+1)$ & $pq+1$& $pq$ \\
\hline
$p\wedge q$ & $pq$ & $pq+1$ & $(p+1)(q+1)$& $(p+1)(q+1)+1$\\
\hline
\hline
$p\Rightarrow q$  & $p(q+1)+1$ & $p(q+1)$ & $(p+1)q+1$ & $(p+1)q$ \\ 
\hline
$p \not\Rightarrow q$  & $p(q+1)$ & $p(q+1)+1$ & $(p+1)q$ & $(p+1)q+1$ \\
\hline
$\lnot p\Rightarrow \lnot q$  & $(p+1)q+1$ & $(p+1)q$ & $p(q+1)+1$ & $p(q+1)$ \\
\hline
$p \Leftarrow q$  & $(p+1)q+1$ & $(p+1)q$ & $p(q+1)+1$ & $p(q+1)$ \\
\hline
$p \not\Leftarrow q$  & $(p+1)q$ & $(p+1)q+1$ & $p(q+1)$ & $p(q+1)+1$ \\
\hline
$ \lnot p \Leftarrow \lnot q$  & $p(q+1)+1$ & $p(q+1)$ & $(p+1)q+1$ & $(p+1)q$ \\
\hline
\hline
$p\downarrow q$ & $(p+1)(q+1)$ & $(p+1)(q+1)+1$ & $pq$& $pq+1$\\
\hline
$p\uparrow q$  & $pq+1$ & $pq$ & $(p+1)(q+1)+1$ & $(p+1)(q+1)$ \\
\hline
$p\Leftrightarrow q$ & $p+q+1 $ & $p+q $ & $p+q+1$ & $p+q $ \\
\hline
$p |q$ & $p+q $ & $p+q+1 $ & $p+q$ & $p+q+1 $ \\
\hline
$\iota_1 $ & $(p+1)pq+1 $ & $(p+1)pq$ & $p(p+1)(q+1)+1$ & $p(p+1)(q+1) $ \\
\hline
$\iota_0$ & $(p+1)pq $ & $(p+1)pq+1$ & $p(p+1)(q+1)+1$ & $p(p+1)(q+1) $ \\ [1ex]
\hline
\end{tabular}
\end{center}
\end{scriptsize} \footnote{Church [4] in exercise 16 gives a similar system which he calls the $\emph{dual}$ of an expression, by the mapping $(1\mapsto 0), (\vee \ \mapsto \ \wedge), (\Rightarrow \ \mapsto \ \not \Leftarrow),(\Leftrightarrow \ \mapsto \ \not \Leftrightarrow ), (\Leftarrow \ \mapsto \ \not \Rightarrow), (\downarrow \ \mapsto\ \uparrow ). $ These mappings are in $\mathcal{OP} \mapsto \mathcal{OP}_{**}$, they use $1^{st} Complement \quad p'(0011)$ and $2^{nd} Complement \quad q'(0101)$ as selectors and then negate the result for example $\lor \rightarrow  (\frac{0011}{0101} \mapsto (0111))+(1111)=(1000) \rightarrow \land,$ is what is done when we unbind the transformation. In Stoll [8 p.178], we find some polynomials, also in an exercise, using $\mathbb{Z}_{0,1}$, which is just the Church dual space, without references. Also, Bergmann [2] uses an arithmetic computation without references, but this is based on the max and min of an expression.}  \section{The Generators for the Polynomial operators} In this section, we will discuss the \emph{primitive} connectives and the \emph{generators} for the operators that we got familiar with.
As we have learned on the way, there is in fact only a need for $\{\wedge,\neg \} \mapsto (pq,1)$ or $\{\lor,\neg \} \mapsto (p+q+pq,1)$ to create all the operators in a logic theory, and we prove that now. \begin{theorem} $\{\wedge,\neg \} \mapsto (pq,1)$ or $\{\lor,\neg \} \mapsto (p+q+pq,1)$  generate all the 16 binary operators in a logic theory. In fact, we can use any $(3-1,1110)$ and $\neg$ or $(1-3,(0001))$ and $\neg$ as generators. \end{theorem}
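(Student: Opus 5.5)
Working in the normal family $\mathcal{H}^*=g(p,q,1)$, where each binary operator is a Boolean polynomial $a_1pq+a_2p+a_3q+a_4$ over $\mathbb{Z}_2$, I would show that the set of polynomials obtainable from $p$, $q$ by the generators is all of $\mathcal{B}_2$. Since $g$ is a bijection onto $\mathcal{B}_2$ (paragraph 7§), this gives all $2^4=16$ binary operators. The basic translation is that $\wedge$ gives multiplication and $\neg$ gives $g\mapsto g+1$, so $\{\wedge,\neg\}\mapsto(pq,1)$.

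The generation splits into four steps.

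\textbf{Step 1 (constants and literals).} From the literals $p,q$, negation gives $p+1$ and $q+1$. Then $p(p+1)=0$ (collapsing equation) and its negation is $1$, so $\iota_0$ and $\iota_1$ are obtained.

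\textbf{Step 2 (conjunctive terms).} Multiplying literals gives the four minterms $pq$, $p(q+1)$, $(p+1)q$ and $(p+1)(q+1)$, that is, $\wedge$, $\not\Rightarrow$, $\not\Leftarrow$ and $\downarrow$. Negating them gives $\uparrow$, $\Rightarrow$, $\Leftarrow$ and $\vee$, as in the table of Section 2.

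\textbf{Step 3 (the linear operators).} The remaining polynomials are $p+q$ and $p+q+1$, and addition is not a generator, so this is the main obstacle. I would express addition through products and negation by
\[
 p+q=\bigl(p(q+1)+1\bigr)\bigl((p+1)q+1\bigr)+1 .
\]
Expanding the product gives $pq+p+q+1+pq=p+q+1$, and adding $1$ gives $p+q$. This is the polynomial form of $\neg((p\Rightarrow q)\wedge(q\Rightarrow p))$. Negating once more gives $\Leftrightarrow$. With Steps 1 and 2 this exhausts the sixteen elements of $\mathcal{B}_2$.

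\textbf{Step 4 (the other generator sets).} For $\{\vee,\neg\}\mapsto(p+q+pq,1)$ I would use the De Morgan identity already proved in the example after Theorem 2.2: $pq=((p+1)(q+1)+1)+1$ up to negating the arguments. So $\wedge$ is generated, and everything reduces to the first case. For "any $(3\text{-}1,1110)$", meaning any operator whose truth string has three 1's, the polynomial is $m+1$ for a minterm $m$. Negating gives the minterm, and composing with the negated literals $p+1$, $q+1$ moves it to $pq$. Hence $\wedge$ is recovered, and the first case applies again. The $(1\text{-}3,0001)$ case is the same argument with one fewer negation.

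I expect the only nontrivial point to be Step 3: obtaining the XOR/equivalence pair, which is not a product of literals, from multiplication alone. The displayed identity settles it. The rest is bookkeeping against the table of Section 2.
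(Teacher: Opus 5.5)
Your proof is correct, but it departs from the paper's at the step you identified as the main obstacle.

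\textbf{The paper's route.} It treats $+$ and $\times$ as freely available arithmetic operations. Once $\{\wedge,\neg\}$ supplies $pq$ and $1$, it notes that the four areas $pq$, $p(q+1)$, $(p+1)q$, $(p+1)(q+1)$ can be formed. It then takes the subset sums of $X_2=\{p,q,pq,1\}$, which give all sixteen polynomials, so $p+q$ and $p+q+1$ come for free. The $\vee$ case and the $(3\text{-}1)$, $(1\text{-}3)$ cases are handled by saying the same construction applies.

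\textbf{Your route.} You allow only what the generators actually provide: multiplication, $g\mapsto g+1$, and substitution. You therefore have to build the XOR pair explicitly, which you do with the identity $p+q=\bigl(p(q+1)+1\bigr)\bigl((p+1)q+1\bigr)+1$. You also reduce the other generator sets to $\wedge$ explicitly: De Morgan for $\vee$, and for the operators with a single $0$ or a single $1$, negation followed by substitution of negated literals.

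\textbf{What each buys.} The paper's argument is shorter and makes the count $2^4$ transparent through the basis $\{1,p,q,pq\}$. However, $+$ is itself the connective $p+q$, so strictly it shows generation by the ring operations, not by $\{\wedge,\neg\}$ alone. Your argument proves functional completeness in the usual logical sense, at the cost of one explicit identity. It is also more explicit than the paper about why the $(3\text{-}1)$ and $(1\text{-}3)$ generator sets work.
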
 \begin{proof} We are allowed to use the arithmetic operators + and $\times$ (which is a repeated addition) to create logic operators. With $\{\wedge,\neg \}$ we have the polynomial $pq$ and $1$ and $p$ and $q$ as $pq$ is a product of $p$ and $q$, which also are the defining areas (the other areas are given in relation to $p$ and $q$), in the unit square.
Recall that the areas in the unit square are $(1)x\land y, (2)x\land y', (3) x'\land y, (4) x'\land y')\mapsto pq, p(q+1), (p+1)q,(p+1)(q+1).$ 
This means that we are able to construct all the possible areas in the unit square by using $(p,q,pq,1)$, which is $X_2=\{ p,q,pq,1\}$, and $\mathcal{P}(X_2)$ the 16 different polynomials. We can use the fiber defined to construct the operators. The proof for $\{\lor,\neg \} \mapsto (p+q+pq,1)$ is the same, but here we are given $p$ and $q$ from the start. We can use any $(3-1,1110)$ and $\neg$ or $(1-3,(0001))$ and $\neg$ operator to get the same generators, for example for 3-1  ${\Rightarrow(1011),\neg } \mapsto (p(q+1)+1,1)$  or 1-3 ${\not \Rightarrow(0,1,0,0),\neg }\mapsto (p(q+1),1)$ because we can use the same construction as for $\lor$ and $\land $. When it comes to $p,q, \neg p, \neg q, 0$ and $1$ $p$ and $q$ just connect to one area and the other cannot be constructed, with just + and $\times$ (with matrix extension selectors can be used as mono-connectives), $p'$ and $q'$ covers three areas out of four, but the last one cannot be constructed. For the trivial operators 0, no areas can be constructed, and for 1, none of the four areas can be constructed. \end{proof} The choice to use $\land$ or $\lor$ and negations is more traditional, and these operators are convenient to have directly. Still, if we use the $\emph{Sheffers stroke}$ $$op_{(p\uparrow q)\boldsymbol{x} (0,1,1,1)} \in \mathbb{Z}_{(1,0)} \mapsto pq+1$$ ; we just need this. It is, of course, at the expense of readability. Just using $(p\uparrow q)$ will  yields very clunky statements; we prove this now. \begin{theorem}Every mapping on $$(1,0) \times (1,0) \rightarrow {1,0}$$ can be generated by an operator with an odd number of 1 in $op_{\boldsymbol{x}(x_1,x_2,x_3,x_4)}$ that follows the votes of $\uparrow$ and $\downarrow.$ In $g(p,q,1)$ we have $pq+1\mapsto p\uparrow(0111)q(\lnot \land)$ and $g(p,q,1)=(p+1)(q+1)\mapsto p\downarrow(0001)=(\lnot \lor)$, but there is no other operator with this property in an algebraic ring $(+\times)$. \end{theorem}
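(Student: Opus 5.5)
The plan has two halves: first show that $\uparrow$ and $\downarrow$ each generate every mapping $(1,0)\times(1,0)\to(1,0)$, then show that no other binary operator does. The parity condition ``odd number of 1'' will come out of the second half as a consequence, not be used as an input. Throughout I work in $\mathcal{H}^*=g(p,q,1)$ with the values $(x_1,x_2,x_3,x_4)$ read off at $(p,q)=(1,1),(1,0),(0,1),(0,0)$ as in the number strings of the normal family.

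\textbf{Existence for $\uparrow$ and $\downarrow$.} Substituting $q:=p$ in $pq+1$ gives $p^2+1=p+1$ by the collapsing equation of \textbf{5§}, so $p\uparrow p$ is $\neg p$. Next,
\[
(p\uparrow q)\uparrow(p\uparrow q)\mapsto (pq+1)^2+1=pq,
\]
which is $\wedge$. Hence $\uparrow$ yields $\{\wedge,\neg\}\mapsto(pq,1)$, and the preceding theorem on generators then gives all 16 operators. Dually, for $(p+1)(q+1)$ the substitution $q:=p$ gives $(p+1)^2=p+1$, again negation. Feeding the negated output back in gives $\bigl((p+1)(q+1)+1+1\bigr)\cdot$, i.e.\ $\neg(p\downarrow q)=(p+1)(q+1)+1$, which is $\vee$. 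So $\downarrow$ yields $\{\vee,\neg\}$, and the same theorem applies.

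\textbf{Necessity: reduction to four candidates.} Suppose a single operator $g$ generates everything, where generating means substituting letters or earlier compound statements into $g$ (the Rule of Substitution).
\begin{itemize}
\item If $x_1=g(1,1)=1$, then by induction on the build-up every compound statement takes the value 1 when all letters are 1. So $\neg p$, with value $0$ at $p=1$, can never be obtained.
\item If $x_4=g(0,0)=0$, the same argument shows every compound takes the value 0 when all letters are 0, so again $\neg p$ is unreachable.
\end{itemize}
Therefore a sole generator must have $x_1=0$ and $x_4=1$, which leaves the four strings $0001, 0011, 0101, 0111$.

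\textbf{Necessity: eliminating the middle two.} The strings $0011$ and $0101$ are $p+1$ and $q+1$, each depending on only one argument. The set $\{p,q,p+1,q+1\}$ is closed under substitution into $p+1$ and into $q+1$, because $(r+1)+1=r$ by the annihilating equation. So neither can produce $pq$, which has degree 2 in the ring $(+,\times)$. This leaves exactly $0001\mapsto(p+1)(q+1)$ and $0111\mapsto pq+1$.

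\textbf{The parity remark and the main obstacle.} Given $x_1=0$ and $x_4=1$, ``odd number of 1'' is equivalent to $x_2=x_3$, which again selects exactly $0001$ and $0111$. So the parity phrasing in the statement is a reformulation of the two conditions above. The main care point is the induction in the necessity step: it must be stated as preservation of the constant 1 (respectively 0) under the Rule of Substitution applied only to $g$ and letters. It is short, but it is the real content of the claim that there is no other operator with this property.
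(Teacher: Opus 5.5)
Your proposal is correct. Its existence half is the paper's argument made explicit: the paper also uses $p\uparrow p\mapsto p+1$, $p\downarrow p\mapsto p+1$ and $\uparrow=\neg\wedge$, $\downarrow=\neg\vee$ to reduce to $\{\wedge,\neg\}$ and $\{\vee,\neg\}$. One slip: the expression $\bigl((p+1)(q+1)+1+1\bigr)\cdot$ should read $\bigl((p+1)(q+1)+1\bigr)^2=(p+1)(q+1)+1$; your conclusion that this is $\vee$ is right.

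Your uniqueness half takes a genuinely different route. The paper only observes that $\uparrow$ and $\downarrow$ collapse to negation on identical arguments, and asserts by inspection that no other operator does so. It never explains why this property is necessary for a sole generator. Its inspection claim is also not literally true: $p+1$ and $q+1$ (strings $0011$, $0101$) satisfy $g(p,p)=p+1$ as well.

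Your preservation argument supplies the missing necessity. A generator with $g(1,1)=1$ or $g(0,0)=0$ preserves that constant under substitution, so a sole generator must have $x_1=0$ and $x_4=1$, which is exactly the condition $g(p,p)=p+1$. Your closure argument on $\{p,q,p+1,q+1\}$ then disposes of the two unary complements the paper overlooks. It also shows why the parity condition in the statement is the right extra filter: given $x_1=0$ and $x_4=1$, an odd number of 1s is equivalent to $x_2=x_3$.

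The only cost of your route is that you must fix the notion of generation. You correctly restrict to substitution of letters and compounds, with no constants. This restriction is essential. If the constant $1$ were available, then $1\not\Rightarrow q\mapsto q+1$ and $p\not\Rightarrow(1\not\Rightarrow q)\mapsto pq$, so $\not\Rightarrow$ would also be complete and the uniqueness claim would fail.
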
 \begin{proof} It is well known that starting with either $(\neg, \lor)$ or $(\neg, \land)$, the other operators can be defined by using these as primitives. Now as $\uparrow=\neg \land$ and $\downarrow =\neg \lor$, which is seen by adding one to $(pq+1)+1=pq$ and one to $((p+1)(q+1))+1$ the result follows. For the uniqunes, we observe that $$ (p\uparrow p)\mapsto pp+1=p+1 \mapsto \neg p$$ $$ (p\downarrow p)\mapsto (p+1)(p+1)=p+1 \mapsto \neg p.$$ So these operators can be used to form a negation. No other operator has this crucial property, which could be found by inspecting them with the test above.  \end{proof} \begin{example} We can also solve the polynomial equation \begin{equation}p \uparrow q = p \downarrow q. \end{equation} \begin{align} pq+1=(p+1)(q+1)\ pq+1=pq+p+q+1 \Leftrightarrow p=q \end{align} which gives $$p \uparrow p = p \downarrow p$$ $$\neg p=\neg p.$$ Here we learned that $\uparrow$ and $\downarrow$ are equal when the statement letters are equal, and as this gives $p+1=p+1\mapsto \neg p$, we proved the last line. To convince us, we look at the mapping $$ \frac {\uparrow}{\downarrow } (\begin{smallmatrix}p & 1 & 1 & 0 & 0\\ p & 1 & 1 & 0 & 0 \\ \end{smallmatrix})\mapsto (\begin{smallmatrix} \neg p  & 0 & 0 & 1 & 1\\ \neg p & 0 & 0 & 1 & 1\\ \end{smallmatrix})$$ when we follow the “pick” of these operators. \end{example} \begin{remark} This phenomenon, in which a binary operator becomes a singular operator when acting between two identical statement letters, as in $(p|p)=\neg p$ and $(p\downarrow p)=\neg p$, is a bit of a chimera. It is clear that the result in $\mathcal{OP}$ is not showing a singular operator, as this is $p+1\in \mathcal{OP}^ \mapsto op_{\boldsymbol{x}(0,0,1,1)}$ one of the usual polynomials. If we look at  the table as
\begin{tabular}{@{ }c@{ }@{ }c | c@{ }@{ }c@{ }@{ }c@{ }@{ }c@{ }@{ }c}
$p$ & $p$ &  ($p\uparrow p$)=$\neg p$    \\
\hline
1 & 1 &   \textcolor{red}{0}  \\
0 & 0 &   \textcolor{red}{1}   \\
\end{tabular} as in Quine [7, p. 46] is not so clear because p as a statmentletter can have the truth values (1,0) and $\neg p$ have the opposite (0,1), but here we talk about statement letters connected by an operator (statement forms) and then $p \star p$ has the following possibilities $(1,0)^2= (1,0)\times (1,0)= {(1,1)(1,0)(0,1)(0,0)}.$ This argument raises the question of the character of the negation: it is amorphous and could be thought of as a singular operator, yet in some cases it behaves as a binary operator. (It is clear that $p \lnot q$ does not make sense, but $p (p'0011) q = p'.$) As $\neg p \mapsto p+1$, this means that we use the four-place string $1100+1111=0011$ just as any binary calculation. And above, we saw that $(p\uparrow p)(0001)=\neg p(01)$, which means that, to the left, we have a binary operation and, to the right, a singular operation, raising questions about their equality. If we use Lebniz's definition of equality $x=y \Leftrightarrow x$ has every property that $y$ has, and $y$ has every property that $x$ has”. This does not point to an equality.  If we use Quine's definition and PBNF we get $$x=y \Leftrightarrow (z)(z\in x \Leftrightarrow z\in y)$$ with $(p|p)\mapsto pp+1=p+1$ and $\neg p\mapsto p+1$. In the symbol set $x=(0,p,p+1,1)$ and $y=(0,p,p+1,0$) so this gives equality for $x$ and the $y$ as $(z)(z=0, z=1, z=p, z=p+1\in x \land y).$ If we use the normal form CBNF, we get left side $(p\uparrow p)=(x\land y)'=x'\lor y'=(x’\land y)\lor (x\land y')\lor (x'\land y')$ which are the areas 234 in the Venn-diagram, but this does not work for $x'$, which is area 2 in the Venn-diagram, consisting of $x=1$ and $\lnot x=2$. We get \begin{theorem} The equality $(p\uparrow p)=\neg p$ is  \emph{undecidable} in the operator space or with boolean normal forms. In the polynomial \emph{House} of PBNF, we can prove the equality, using the set-theoretical definition of equality by Quine, so it is decidable. \end{theorem}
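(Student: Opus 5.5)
The proof has two halves, matching the two sentences of the statement: a negative part (undecidability in $\mathcal{OP}$ and in the Boolean normal forms) and a positive part (decidability in the \emph{House} of PBNF via Quine's definition). Both rest on the discussion in the Remark immediately preceding the theorem. The left side $(p\uparrow p)$ is a binary statement form, so its truth-value domain is $(1,0)\times(1,0)=\{(1,1),(1,0),(0,1),(0,0)\}$. The right side $\neg p$ is a singular form with domain $(1,0)$ only.

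For the negative part, I would first fix the two criteria of equality available in the operator space. One is Leibniz's criterion: $x=y$ iff every property of $x$ is a property of $y$ and conversely. The property ``is an operator on $(1,0)^2$'' holds for $(p\uparrow p)$, whose string is $op_{\boldsymbol{x}(0,1,1,1)}$. It fails for $\neg p$, whose string is the two-place $(0,1)$. So Leibniz's criterion cannot confirm equality, even though the two agree on the diagonal pairs $(1,1)$ and $(0,0)$. Next I would pass to CBNF, as in the Remark. There $(p\uparrow p)=(x\land y)'$ expands to the areas $2,3,4$ of the Venn diagram, while $\neg p=x'$ occupies a single area defined only in the one-letter diagram. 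The two normal forms live in different area-lattices, so neither derivation, nor one leading to the negation of the equality, is available. I would present this as ``undecidable'' in the sense of the paper: neither the identity nor its negation can be derived in that setting. This is the step I expect to be the main obstacle, since it is an argument about the lack of a common frame rather than a formal independence proof. I would make it precise by exhibiting the mismatch of domains $(1,0)^2$ versus $(1,0)$ and noting that any normal-form comparison requires a common domain.

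For the positive part, I would transform both sides with $g(p,q,1)$ from the Normal family $\mathcal{H}^*$, as in the table of Section 2. Using $\uparrow\mapsto pq+1$ and substituting $q=p$ (Rule of Substitution), the collapsing equation $p^2=p$ from 5§ gives
\[
(p\uparrow p)\mapsto pp+1=p+1 .
\]
By the same table, $\neg p\mapsto p+1$. Both polynomials are then elements of the singular group $\mathcal{B}_1=\langle 0,p,p+1,1;+\rangle$ from 7§, where the binary/singular distinction has disappeared. I would then apply Quine's definition $x=y\Leftrightarrow (z)(z\in x\Leftrightarrow z\in y)$, running $z$ over the symbol set $\{0,p,p+1,1\}$, and check membership element by element, as done in the Remark. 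Finally, since $g$ is a bijection for linear polynomials (7§), the equality of images pulls back through the fiber $g^{-1}$. This gives a decision procedure in the \emph{House} of PBNF, and in any other family (for instance $\mathcal{H}'$, where both sides map to $p$) the same computation yields the same answer.
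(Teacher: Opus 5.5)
Your proposal follows essentially the paper's route: the negative half is the same domain-mismatch argument (a four-place binary form set against the two-valued $\neg p(01)$), and the positive half is the same reduction to equality of polynomials, with both sides mapping to $p+1$ and the equality pulled back through the bijective fiber $g^{-1}$. One small slip, which does not change the argument: you attach to $(p\uparrow p)$ the string $0111$ of $\uparrow$ itself, whereas the paper's proof, and your own computation $pp+1=p+1$, give the complement $p'(0011)$, i.e.\ areas 3 and 4.
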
 \begin{proof} We showed that the truth table gave $(p\uparrow p)=p'(0011)$, this means that we got the complement of $p$, areas 3 and 4. This is not the same as the negation of $p$, which is $\lnot p(01)$ with only two truth values, $x$(area 1) and $\lnot x$(area 2). In PBNF, this problem does not arise, because we instead consider the \emph{equality of polynomials}. \end{proof} \begin{proposition} For the singular operator $\neg$ we have $$\neg p = p'=(p\uparrow p)=(p\downarrow p)=(p\uparrow(q)^T)=(q')^T.$$ \end{proposition}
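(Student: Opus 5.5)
The plan is to prove the chain of equalities by transforming every member into the normal family $\mathcal{H}^*$, i.e.\ $g(p,q,1)$, and checking that all of them give the same polynomial $p+1$, equivalently the number string $(0,0,1,1)$. Equality is read as \emph{equality of polynomials} in the House of PBNF, as in the theorem just proved, so each equality reduces to a polynomial identity in $\mathbb{Z}_2$ using only the collapsing equation $g^2=g$ and the annihilating equation $g+g=0$ from 5§.

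The steps are as follows.
\begin{enumerate}
\item Recall from the table of $\mathcal{H}^*$ that $\neg p\mapsto p+1$. Recall also that the $1^{st}$ Complement selector $p'$ has the string $1100+1111=(0,0,1,1)$, which is the polynomial $p+1$. This gives $\neg p=p'$.
\item Use the computations already made in the proof of the Sheffer/Peirce theorem: $(p\uparrow p)\mapsto pp+1=p+1$ and $(p\downarrow p)\mapsto(p+1)(p+1)=p+1$. Both collapse by $g^2=g$.
\item Handle the two transposed terms. I would interpret $(q)^T$ through the matrix representation of 1§ and 7§. The selector $q=\left[{1\,0}\atop{1\,0}\right]$ is the column pattern. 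Transposing it gives $\left[{1\,1}\atop{0\,0}\right]$, which is exactly the selector of $p$, so $(q)^T\mapsto p$. Hence $(p\uparrow (q)^T)=(p\uparrow p)\mapsto p+1$. Similarly $(q')^T$ is the transpose of $\left[{0\,1}\atop{0\,1}\right]$, namely $\left[{0\,0}\atop{1\,1}\right]$. That is the string $(0,0,1,1)$, i.e.\ $p'\mapsto p+1$.
\item Conclude by transitivity of polynomial equality. If desired, pull back with the fiber $g^{-1}$, which is a bijection in the linear case, to the single operator $op_{\boldsymbol{x}(0,0,1,1)}$.
\end{enumerate}

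The main obstacle is step 3. It depends on fixing a consistent convention for how the four-place string is laid out in the $2\times2$ matrix. The earlier proof of the pullback families warns that anti-diagonal entries behave like a transpose, so the matrix shape is not always reliable. I would first verify, with the matrix-to-string convention of 7§ (reading rows left to right), that transposition swaps exactly the roles of $p$ and $q$, i.e.\ that it exchanges $x_2$ and $x_3$. Only then would I apply it to the selectors $q$ and $q'$.

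A second, more conceptual point concerns the distinction between $p'$ (a four-valued complement) and $\neg p$ (two-valued). The previous theorem flags this distinction as undecidable outside PBNF. I would sidestep it by stating explicitly that all equalities are meant in $\mathcal{H}^*$, where both sides are the same element $p+1$ of $\mathcal{B}_1=\langle 0,p,p+1,1;+\rangle$.
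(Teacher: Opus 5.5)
Your proposal is correct and follows the paper's own proof, which notes that every member of the chain maps to $p+1$ in $g(p,q,1)$ and then invokes the bijectivity of the fiber $g^{-1}$; your steps 1--4 are that argument written out in detail. Your concern in step 3 is settled by the convention the paper fixes explicitly in Section 4, $p(1100)\mapsto(\begin{smallmatrix}1&1\\0&0\end{smallmatrix})$ and $q(1010)\mapsto(\begin{smallmatrix}1&0\\1&0\end{smallmatrix})$ with $p=q^T$. Under that convention $(q)^T\mapsto p$ and $(q')^T\mapsto(0,0,1,1)=p+1$, exactly as you computed.
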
 \begin{proof} As all the formulas map to $p+1$ and we have a bijection in the fiber $g^{-1}$, this proves the theorem. \end{proof} Here, we also note the facts. \begin{equation} 0|0=0\downarrow 0 =1 \quad \text{and} \quad 1\uparrow1= 1 \downarrow 1=0 \end{equation} \end{remark}  \begin{proposition}Another possibility of primitive connectives are $$(\Rightarrow, \not \Leftarrow \ \mapsto (p(q+1)+1,(p+1)q)$$ but on the contrary $\Leftrightarrow and \not\Leftrightarrow$ does not work.\end{proposition}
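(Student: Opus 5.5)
The plan is to reduce the first claim to the earlier theorem that $\{\wedge,\neg\}\mapsto(pq,1)$ generates all 16 binary operators. So it suffices to express $\neg$ and $\wedge$ by means of $\Rightarrow$ and $\not\Leftarrow$, working entirely in the normal family $\mathcal{H}^*g(p,q,1)$. There $\Rightarrow\mapsto p(q+1)+1$ and $\not\Leftarrow\mapsto (p+1)q$, and substitution is just polynomial substitution followed by the collapsing and annihilating equations of \textbf{5§}.

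The steps, in order, are these.
\begin{enumerate}
\item First I produce the constant $1$ without extra primitives. I use $p\Rightarrow p\mapsto p(p+1)+1=p+p+1=1$.
\item Next I get the negation by substituting $1$ into the second slot of $\not\Leftarrow$. This gives $p\not\Leftarrow(p\Rightarrow p)\mapsto (p+1)\cdot 1=p+1$, which is $\neg p$.
\item Then I get the conjunction by substituting $\neg p$ into the first slot of $\not\Leftarrow$. This gives $(\neg p)\not\Leftarrow q\mapsto (p+1+1)q=pq$.
\end{enumerate}
Since $g$ is a bijection for quadratic polynomials (the fiber $g^{-1}$), these polynomial identities pull back to the operator identities. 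We then have $\{\wedge,\neg\}$, and the generator theorem finishes the positive part. As a cross-check, $\neg p\Rightarrow q\mapsto (p+1)(q+1)+1$ recovers $\vee$ directly, matching the table.

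For the negative part I plan an invariant argument on the polynomials. In $\mathcal{H}^*$ we have $\Leftrightarrow\mapsto p+q+1$ and $\not\Leftrightarrow$ (i.e.\ $|$) $\mapsto p+q$. Both are \emph{affine}, of the form $a_2p+a_3q+a_4$ with $a_1=0$. The letters $p$ and $q$ themselves are affine. Substituting affine polynomials into $x+y$ or $x+y+1$ again gives an affine polynomial, because only addition in $\mathbb{Z}_2$ occurs and no product is ever formed. By induction on the number $k$ of connectives in a form $S_{c(k,l)}$, every statement built from $\Leftrightarrow$ and $\not\Leftrightarrow$ therefore has a polynomial with $a_1=0$. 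But $\wedge\mapsto pq$ has $a_1=1$, and by the uniqueness of the general normal form (remarked in the proof of the second theorem of Section 2) it is not affine. Hence $\wedge$, and likewise $\vee,\Rightarrow,\uparrow,\downarrow$, cannot be generated.

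The main obstacle is making the negative part rigorous within the paper's framework. I have to fix precisely what "generate" means: whether constants are freely allowed, and that only the Rule of Substitution is used. I also need the normal form of a quadratic Boolean polynomial to be unique, so that "$a_1=0$" is a well-defined invariant. I would handle this by noting that the map from $\{a_1pq+a_2p+a_3q+a_4\}$ to four-place strings is the bijection $g$. Two reduced polynomials with different $a_1$ therefore correspond to different operators, and the induction goes through even if the constants $0,1$ are adjoined.
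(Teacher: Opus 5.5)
Your proposal is correct, but it reaches the result by a different route from the paper.

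The paper works directly with the four-place strings. For the positive part it adds the two generating polynomials, $p(q+1)+1+(p+1)q=p+q+1$ (i.e.\ $1011+0010=1001$). It concludes that $p,q,1$ are available and appeals to the earlier generator theorem, where $+$ and $\times$ are used freely. For the negative part it notes $1001+0110=1111$ and $1001\cdot 0110=0000$, so only the constants $1$ and $0$ arise and ``we cannot produce $p$ and $q$''.

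You instead stay inside the Rule of Substitution. You derive $1$ as $p\Rightarrow p$, then $\neg p$ as $p\not\Leftarrow(p\Rightarrow p)$, then $pq$ as $(\neg p)\not\Leftarrow q$, and all three computations are right. For the negative part you prove a closure invariant (every form built from $\Leftrightarrow$ and $|$ has $a_1=0$) and combine it with the bijectivity of $g$ on reduced polynomials.

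What your approach buys is an actual proof. The paper's positive step combines the two operators by polynomial addition, that is, by an exclusive or that is not one of the proposed primitives. Its negative step inspects only a single sum and a single product of the generators, whereas arbitrary iterated substitutions have to be excluded. Moreover, $p$ and $q$ are always available as letters, so the real obstruction is the absence of the product term $pq$, which is exactly what your invariant detects. The paper's version is shorter and stays at the level of string arithmetic, but it is closer to a heuristic.

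One refinement: if you allow intermediate forms with more than two letters, phrase the invariant as ``no monomial contains two distinct letters''. It is preserved for the same reason, and uniqueness of the reduced polynomial in several variables then finishes the argument.
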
 \begin{proof} If we add the polynomials, we get $1011+0010=1001$, or $(p(q+1)+1+(p+1)q)=p+q+1$, so we get the generators $p,q,1$, and we know this is enough as a primitive operator.
The second case has $1001+0110=1111$, and by multiplication, we get $\langle1001,0110\rangle=0000$, which gives 1 and 0, so we cannot produce $p$ and $q$. \end{proof} \begin{remark} This is a self-dual system of primitive connectives proposed in [3]: $\mathcal{OP}\ni \Rightarrow - \not \Leftarrow  \in \mathcal{OP}_{*}$ As we can see $$\lnot(A)\lnot (p \Rightarrow q \not= \lnot A_1(p\not \Leftarrow q)\mapsto^{*} p(q+1) \not= (p+1)q.$$  Church then proposes two different negations to get $\lnot A=  \lnot_1 A_1$. From the polynomial, we see that what is needed for 0100 to be 0010 is to add 0110(XOR) so we get the identity $\lnot A=  | A_1$. This is not that bad as $\lnot p(1100) \mapsto p+1 0011$ and here $| \ p(1100)\mapsto \ p+p+q=q (1010)$, so this means moving $p$ to $q$, which is the same as  $$(\begin{smallmatrix} 1 & 1\\0 & 0\\ \end{smallmatrix}) \mapsto (\begin{smallmatrix} 1 & 0\\0 & 1\\ \end{smallmatrix})$$ the transpose of $p$. \end{remark} \section{Boolean Linear Polynomials, Singular Operators and Extensions} Here we study in depth Boolean Linear Polynomials, which represent singular operators, acting on null-class operators. These operators are often considered to be of no value, except for the negation $\lnot$, and thus they are not studied in most places, contrary to the binary operators, which are fairly well-studied. However, we shall see that it may be worth taking a look at the linear case. We comment on [6, 11] on the demi negation, a canonical decomposition of the negation, and more. 
We have here \begin{align}f:(x_1,x_2) \longrightarrow \mathcal{P}(p;q) \\ p: f(x_1,x_2)= a_1p+a_2 ; a_n=0, 1; \  q: f(x_1,x_2)= a_1q+a_2. \end{align} 
We have no mixed terms, but we shall later extend this family by letting both $p$ and $q$ be linear polynomials, so we will accept $p+q$(|,0110) or $p+q+1$(I,1001) in this extension. These operators are binary, but they belong to the family of Selectors, so their qualities fit in well here. We shall also note that the line between singular and binary operators is fuzzy.
The first four polynomials come from the singular operators, $\{0,p,\neg,1\}$ or with the identity element $(1,0)$ first $$op_{x_1,x_2}:\{=(1,0),-(0,0),\neg (0,1),+(1,1)\}$$ and we have the $X_p=\{p,1\}$, and $X_q=\{q,1\}$ where we defined the polynomials $f: (x_1;x_2)\rightarrow \mathcal{P}(p)\{0,p,p+1,1\}$, which gives the space $$\mathcal{P}(p;q)=(X_1^*,f).$$ This space comes naturally from the operator space $(X_1, op_{x_1,x_2})$ where the set is $X_1=\{1,0\}$ and $op_{x_1,x_2}$ defined above. 
Of course, we can also define other (little) families by making the change $1\mapsto 0$ and $0\mapsto 1$, so we get $$op_{x_1,x_2}^*=\{=(0,1),-(1,1),+(0,0),\neg (1,0)\}.$$ Further on we can change the input order $(1,0)\mapsto 0,1)$ we get the order $$op_{x_1,x_2}'=\{=(0,1),-(0,0),+(1,1),\neg (1,0)\}$$ and finally this could be read as 0=true and 1=false. $$op_{x_1,x_2}^{**}=\{=(0,1),-(1,1),+(0,0),\neg (1,0)\}.$$
As there are just two heterogeneous inputs, 1,0 and 0,1, and for each space there are two different ways of reading, we get at most four dual spaces, or three dual spaces, for $op_{x_1,x_2} $. To sum up \begin{theorem} For singular operators, we find the dual spaces $$1.f(p,1) \quad \text{The normal space}$$ $$2.f(p,1)\mapsto f(p,1')\quad \text{The complement space: change reading for 1(T) to 0(T)}$$ $$3. f(p,1)\mapsto f(p',1)\quad \text{The pullback space: change input from $p$ to $p'=p+1$}$$ $$4.f(p',1)\mapsto f(p',0)\quad \text{The pullback comomplement space: change input for $p$ and $1'=0$}.$$ \end{theorem}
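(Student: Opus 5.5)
The plan is to imitate the proof of the binary-family theorem of Section 2, specialised to the one-letter case. I start from the operator space $(X_1, op_{x_1,x_2})$ with $X_1=\{1,0\}$, where the four singular operators are the strings $=(1,0)$, $-(0,0)$, $\neg(0,1)$, $+(1,1)$. Under the normal transform $f(p,1)$, with $f(x_1,x_2)=a_1p+a_2$, they become $p,0,p+1,1$. This is the Boolean group $\mathcal{B}_1=\langle 0,p,p+1,1;+\rangle$ of 7§, and it gives item 1.

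Next I classify the possible dual spaces by the two independent binary choices available. The first choice is the order of the input pair: either $(1,0)$ or $(0,1)$, i.e. selector $p$ or its complement $p'=p+1$. The second choice is which value is read as true: either $1$ or $1'=1+1=0$. The product of these choices gives at most four families, and I then verify that each is again a copy of $\mathcal{B}_1$.

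\begin{itemize}
\item \emph{Complement space $f(p,1')$.} Every polynomial is replaced by $a_1p+a_2+1$, exactly as in the theorem listing $\mathcal{H}'g(p,q,0)$. This is the map $g\mapsto g+1$ of 4§, and it permutes $\{0,p,p+1,1\}$.
\item \emph{Pullback space $f(p',1)$.} Here $p\mapsto p+1$. This is the one-letter analogue of the mirror map $\boldsymbol{x}_{''}$, which reverses the entries $(x_1,x_2)\mapsto(x_2,x_1)$.
\item \emph{Pullback complement space $f(p',0)$.} This is the composite of the two maps above, and corresponds to $\boldsymbol{x}_{**}$.
\end{itemize}

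Each of these maps is an involution of $\mathcal{B}_1$. Bijectivity of the fibre $g^{-1}$ is therefore preserved, by 7§ and the uniqueness remark in the earlier proof.

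Finally I explain the phrase ``at most four, or three''. Since $p\mapsto p+1$ and $g\mapsto g+1$ act identically on the set $\{0,p,p+1,1\}$, I will tabulate the images of $=,-,\neg,+$ in all four spaces and check which families coincide. In the pullback complement space the translation is $a_1(p+1)+a_2+1=a_1p+a_1+a_2+1$; this fixes $p$ and $p+1$ and swaps $0$ and $1$.

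The main obstacle is making precise in what sense these are four distinct spaces rather than relabelings. The underlying polynomial sets all coincide, so the distinction must rest on the assignment operator$\mapsto$polynomial. I would settle this by presenting the statement as a table, in the same way the binary theorem was justified, and checking the assignments entry by entry.
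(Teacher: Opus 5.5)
Your approach is essentially the paper's. There the theorem is stated as a summary of the construction of the operator lists $op_{x_1,x_2}$, $op^*_{x_1,x_2}$, $op'_{x_1,x_2}$, $op^{**}_{x_1,x_2}$. Its only justification is the count you use: two input orders $(1,0)$, $(0,1)$ times two readings of the true value gives at most four spaces. Your additional checks are correct and go beyond what the paper writes: each passage is an involution of $\{0,p,p+1,1\}$, so the fibre bijection is kept, and the passages match $\boldsymbol{x}_{''}$ and $\boldsymbol{x}_{**}$.

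One claim in your last paragraph is false, however: $p\mapsto p+1$ and $g\mapsto g+1$ do not act identically on $\{0,p,p+1,1\}$.
\begin{itemize}
\item Both swap $p$ and $p+1$.
\item The substitution fixes the constants, since $a_1(p+1)+a_2=a_1p+a_1+a_2$.
\item The map $g\mapsto g+1$ swaps $0$ and $1$.
\end{itemize}
Your own computation already shows this. If the two maps were the same involution, their composite would be the identity, yet you found that the composite swaps $0$ and $1$.

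If you carry out the tabulation you propose, $(=,-,\neg,+)$ is sent to:
\begin{itemize}
\item $(p,0,p+1,1)$ in the normal space;
\item $(p+1,1,p,0)$ in the complement space;
\item $(p+1,0,p,1)$ in the pullback space;
\item $(p,1,p+1,0)$ in the pullback complement space.
\end{itemize}
These are pairwise distinct. So by your own criterion (the assignment operator $\mapsto$ polynomial), all four spaces are genuinely different. The ``or three'' should therefore not be explained by two families coinciding. It is naturally read as the three dual spaces besides the normal one.
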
 We can think of $\neg$ or $\neg p \mapsto p+1$ as a switching- or negations operator and $p$ or $=$ for neutral operator, or the identity. We call $-$ \emph{Annihilation} operator, and is often defined by the statement $$p \land \lnot p \mapsto p(p+1)=0.$$ We call $+$ for the \emph{Creation} operator and also often defined by $$p \lor \lnot p \mapsto p(p+1)+1=1.$$ These definitions are binary, so we shall avoid them here.
Instead we make the following mapping only using addition on $p$, as we here consider the Boolean algebra as a group \begin{equation} B_2=\langle  0,p,p+1,1;+\rangle \cong B^T_2=\langle 0,q,q+1,1;+\rangle \end{equation} Here we use the matrix-mapping for $p$ and $q$ \begin{align} p(1100)\mapsto (\begin{smallmatrix} 1 & 1\\ 0 & 0 \\ \end{smallmatrix}) ;\ q(1010)\mapsto (\begin{smallmatrix}   1 & 0 \\ 1 & 0 \\ \end{smallmatrix}) \end{align} so that $p=q^T.$ In the next definition, we make a difference between the statement letter “p” and the polynomial “$p$” to make the display clearer. This is not absolutely necessary, as you can see the difference by noting whether a connective precedes the letter. But if we just have $p$, we assume it is the operator of the $1^{st}$ Selector. \begin{definition} \begin{align}(\text{p})[p](=\text{p}\mapsto (p+0)=p, \ -\text{p}\mapsto (p+p)=0,  \  +\text{p} \mapsto p+(p+1)=1,\\ \lnot \text{p} \mapsto (p+1) ; \ \text{p} \in S_{c(0,1)}, \  p\in f(p,1)) \end{align} We see that we use all the polynomials $\mathcal{P}(p)\{0,p,p+1,1\}$ available in this space. \end{definition} \begin{proposition} All singular operators, here $x$, is its own inverse $$x(x\text{p})=(xx)\text{p}=\text{p}.$$ \end{proposition}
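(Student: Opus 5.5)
The plan is to read every singular operator in the Definition as a \emph{translation} in the Boolean group $B_2=\langle 0,p,p+1,1;+\rangle$ and then use the annihilating equation $g+g=0$ from 5§. Concretely, I would attach to each operator $x\in\{=,-,+,\lnot\}$ a fixed polynomial $a_x\in\mathcal{P}(p)$:
\[
a_{=}=0,\qquad a_{-}=p,\qquad a_{+}=p+1,\qquad a_{\lnot}=1 .
\]
With these, the Definition says exactly that $x\text{p}\mapsto p+a_x$. The first step is to check that this matches the Definition line by line. For example, $-\text{p}\mapsto p+p=0$ and $+\text{p}\mapsto p+(p+1)=1$.

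Next I would compose. Applying $x$ to the polynomial of $x\text{p}$ gives
\[
x(x\text{p})\mapsto (p+a_x)+a_x=p+(a_x+a_x)=p+0=p .
\]
This uses associativity of $+$ in $\mathbb{Z}_2$ and the annihilating equation of 5§. For the second equality in the statement, I would define the composite operator $xx$ as translation by $a_x+a_x$. By the same computation this is translation by $0$, i.e. the identity operator $=$, so $(xx)\text{p}\mapsto p$. Finally, since $g$ is a bijection in the linear case (7§), the fiber $g^{-1}$ pulls $p$ back to the statement letter p. This gives $x(x\text{p})=(xx)\text{p}=\text{p}$ in the operator space as well.

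As a sanity check I would also run the four cases directly as four-place strings. For instance, $\lnot\lnot$ adds $1111$ twice, giving $1100\mapsto0011\mapsto1100$. For $-$ and $+$ I would use the matrix picture $p=(\begin{smallmatrix}1&1\\0&0\end{smallmatrix})$ and add the fixed string of $p$ or of $p+1$ twice.

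The main obstacle is the interpretation of $-$ and $+$. If these are read as ``$\text{p}\mapsto p+\text{(its own argument)}$'', then applying $-$ to $0$ gives $0$, and $-(-\text{p})$ is not $\text{p}$. The statement only holds if the added term is the fixed first-Selector polynomial $p$ (respectively $p+1$), independent of the current argument. The paper's remark that a bare $p$ denotes the operator of the $1^{st}$ Selector supports this reading, so I would state it explicitly at the start.

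With that reading, the operators form the Klein four-group $\{0,p,p+1,1\}$ acting by translation. Every element of this group has order at most $2$, and that is precisely the claim.
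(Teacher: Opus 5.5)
Your proof is correct and is essentially the paper's own argument. The paper also reads $=,-,+,\lnot$ as adding the fixed polynomials $0$, $p$, $p+1$, $1$ (e.g.\ $-(-\text{p})\mapsto (p+p)+p=p$, the fixed-summand reading you rightly adopt) and pulls back with $\hookrightarrow$, but it checks the four cases one at a time instead of using your uniform identity $a_x+a_x=0$; it also checks the extension operators $|$ and I, which your framework handles immediately as translations by $p+q$ and $p+q+1$.
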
 \begin{proof} Here we use $\hookrightarrow$ as the reverse operation. $$=(=\text{p})\mapsto (p+0)+0=p \hookrightarrow \text{p} $$ $$-(-\text{p}) \mapsto (p+p)+p=p \hookrightarrow \text{p}, (10+10+10=10)$$ $$+(+\text{p}) \mapsto  (p+(p+1))+(p+1)=p \hookrightarrow \text{p}, (10+01+01)=10$$ $$\neg (\neg \text{p})\mapsto (p+1)+1=p \hookrightarrow \text{p}, (10+11+11=10)$$ 
When we examen our proposed extension (| and I) we get $$|(|p)=|(p+q+p)=q+p+q=p.\ \text {For \ II}(p)=I(p+p+q+1)$$  $$= \text{I}(q+1)=p+q+1+q+1=p$$ so the the proposition holds for the extension. \end{proof} The combinations like $-(+p)$ are here called the composition of operators (usually indicated by $\circ$), but the composition is done by adding the operators as shown above, which means that the two of the same operators cancel and two different operators give the third.
\begin{center}
\begin{tabular}{ c| c | c | c | c |}
$\circ$  & $p$ & $-p$ & $\neg p$& $+p$ \\
\hline
$=$ & $p$ & 0  & $\neg p$ & 1 \\
\hline
$ - $ & 0 & $p$ & 1 & $\neg p$ \\
\hline
$\neg $ & $\neg p$ & 1 & $p$ & 0  \\
\hline
$ + $& 1 & $\neg p$ & 0 & $p$  \\
\hline
\end{tabular} \end{center} a full symmetry in the Klein 4-group, with any two different elements givíng the third, and for two of the same, they are inverses.  \begin{remark} We think of these singular operators as $p$ is a circle in the Venn diagram. $p'$ the complement is the rest in the square, $p \cup p'=1, p\lor \lnot p \mapsto p+p+1=1$. In this case, we may identify $\cup$ and $\lor$, whereas with binary operators, we can have non-empty intersections. When we write $p(1,0)$ we think of the operator with two possible values, true=1 or false=0, and $p(1,0)=p(1)=T$ is true and $p(0,1)=p(0)=F $ means $\lnot p$ so $p$ is false or the opposite in a complement space.
In the table, we notice some unintuitive results, for example: $-(\lnot p)=1$ is a tautology, and $+(\lnot p)=0$ is a contradiction, and this is the result of definition 3.11, because $-(\lnot p)\mapsto p+(p+1)=1$ and $+(\lnot p)\mapsto (p+1)+(p+1)=0.$ The table give for $p=(1,0)$ shows that $-p(1,0)=(2,0)=(0,0)=0$ lowers the first value, while $+(-p)=+(0,0)=(0,1)=\lnot p$ lifts the second value. For $-(+p)=-(1,1)=(0,1)=\lnot p$. So the table is  commutative as we get $+(-p) = -(+p).$ For $+(\lnot p)=+(0,1)=(0,2)=(0,0)=0$ as we are characteristic 2 and finally $-(\lnot p)=-(0,1)=(1,1)$ as $-1=1$ here. \end{remark} We summarize in a theorem and here we write $-p=p_-, \lnot p=p_{\lnot}$ and $+p=p_+$. Moreover, we shall use $2 \times 2$ matrices to make the presentation clearer. \begin{theorem} For the singular operators ${p, p_-, p_{\lnot}, p_+}$, the lifting operator, $p_+$, acts on the second coordinate of $p(1,0)$ by adding $1$, and the lowering operator, $p_-$, acts on the first coordinate by adding $1$. The negation operator $p_{\lnot}$ adds $1$ to both coordinates. The transformation is by transposing the input matrices $q_{2 \times 2}^T=p_{2 \times 2}$ $$(\begin{smallmatrix} p_+ & 1 & 0 \\ p_-& 1 & 0 \\ \end{smallmatrix})= (\begin{smallmatrix}1 & 1 \\ 0 & 0 \\ \end{smallmatrix}). $$
If we instead look at $$(\begin{smallmatrix} p_- & 1 & 0 \\ p_+& 1 & 0 \\ \end{smallmatrix})\mapsto (\begin{smallmatrix} 0 & 0 \\ 1 & 1 \ \end{smallmatrix})$$
the transformation is $q_{2 \times 2}^T \mapsto p_{2 \times 2}+1_{2 \times 2}.$
The negation works as $$\begin{smallmatrix} \lnot p(1,0)= & p(1+1 ;& 0+1) \\  \end{smallmatrix} \begin{smallmatrix} =p(0 & 1) \\  \end{smallmatrix}$$ and the transformation is the usual one $\lnot p \mapsto p+1$. We also have that $$(p_x\circ p_{y }) \not = (x \circ y) \circ p$$ where $x$ and $y$  indicate different operators. We also have that $(p_{x}\circ p_{x })=0$. \end{theorem}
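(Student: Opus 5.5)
The plan is to prove the theorem by direct coordinate computation in the Klein 4-group $B_2=\langle 0,p,p+1,1;+\rangle$. We represent each singular operator by its truth pair, i.e.\ its value string $(x_1,x_2)$ under the reading $p(1,0)$. Definition 3.11 gives the four operators as additions: $=\text{p}\mapsto p+0$, $-\text{p}\mapsto p+p$, $\lnot\text{p}\mapsto p+1$, $+\text{p}\mapsto p+(p+1)$. Read coordinatewise mod 2 on $p=(1,0)$, these are: adding $(0,0)$; adding $(1,0)$, so $(1,0)\mapsto(0,0)=0$; adding $(1,1)$, so $(1,0)\mapsto(0,1)=\lnot p$; and adding $(0,1)$, so $(1,0)\mapsto(1,1)=1$.

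This reading settles the first three claims. The lowering operator $p_-$ adds $1$ in the first coordinate only. The lifting operator $p_+$ adds $1$ in the second coordinate only. The negation $p_\lnot$ adds $(1,1)$, which is exactly the polynomial rule $\lnot p\mapsto p+1$ from the Normal family $\mathcal{H}^*$. I would check each case against the composition table preceding the Remark. For instance, $+(-p)=+(0,0)=(0,1)=\lnot p$ and $-(+p)=-(1,1)=(0,1)=\lnot p$, in agreement with the table.

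For the matrix statements, I would use the identification from the section on singular operators, $p(1100)\mapsto(\begin{smallmatrix}1&1\\0&0\end{smallmatrix})$ and $q(1010)\mapsto(\begin{smallmatrix}1&0\\1&0\end{smallmatrix})$, so that $q^T=p$.
\begin{itemize}
\item Stacking $p_+$ over $p_-$ with the columns read as the selector pair $(1,0)$ gives the rows $(1,1)$ and $(0,0)$, i.e.\ the matrix of $p$. This is the transpose of the $q$-matrix, which proves the first display.
\item Swapping the rows ($p_-$ over $p_+$) gives $(\begin{smallmatrix}0&0\\1&1\end{smallmatrix})$. This equals $p_{2\times2}+1_{2\times2}$, i.e.\ $q^T+1$, which proves the second display.
\end{itemize}
The negation display $\lnot p(1,0)=p(1+1;0+1)=p(0,1)$ is simply the coordinate addition of $(1,1)$ again.

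The last two claims both follow from the Klein group structure, where every element is its own inverse.
\begin{itemize}
\item $(p_x\circ p_x)=0$: composing an operator with itself adds the same element twice, and $g+g=0$ by the annihilating equation (5\S). This is the point the text makes right after the proof of the proposition that each singular operator is its own inverse, namely that two copies of the same operator cancel.
\item $(p_x\circ p_y)\neq(x\circ y)\circ p$ for $x\neq y$: one side applies the added constants to $p$ twice, and the other applies their sum once. I would exhibit a concrete witness from the table. For example, $p_-\circ p_\lnot$ read as the sum $0+(p+1)=p+1$ differs from $(-\circ\lnot)\,p$, which the table gives as $1$.
\end{itemize}

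The main obstacle is interpretive rather than computational. The notation $\circ$ is used in two senses, as composition and as adding the resulting polynomials, and the inequality is only meaningful once one fixes that $p_x\circ p_y$ means adding the polynomials $p_x+p_y$ rather than composing the operators. My first step would be to state that convention explicitly. The inequality then reduces to a single check against the table.
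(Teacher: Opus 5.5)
Your proposal is correct and follows essentially the paper's own argument. Like the paper, you record the values $p=(1,0)$, $p_-=(0,0)$, $p_{\lnot}=(0,1)$, $p_+=(1,1)$, add them coordinatewise in $\mathbb{Z}_2^2$ (a Klein four-group table whose zero diagonal gives $p_x\circ p_x=0$), and exhibit one witness for the inequality: the paper uses $p_+\circ p_-=11\neq(+\circ-)\circ p=01$ where you use $p_-\circ p_{\lnot}$, and you also spell out the two matrix displays, which the paper leaves implicit. One small caution: the cancellation remark after the self-inverse proposition concerns operator composition $x(x\,\text{p})=\text{p}$, which yields $p$ rather than $0$, so justify $p_x\circ p_x=0$ only through $p_x+p_x=0$ under your additive convention, as the paper does by reading it off the diagonal.
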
 \begin{proof} Let $(p,p_-, p_{\lnot},p_+)\mapsto(10),(00),(0,1),(11)$ be the values of the  operators acting on $p$. When we add these values, we get
\begin{center}
\begin{tabular}{ c| c | c | c | c |}
$+$  & $p(10)$ & $p_-(00)$ & $p_{\lnot}(01)$ & $p_+(11)$\\
\hline
$p(10)$ & $00$ & $10$  & $11$ & $01$ \\
\hline
$p_-(00)$ & $10$ & $00$ & $01$ & $11$ \\
\hline
$p_{\lnot}(01)$ & $11$ & $01$ & $00$ & $10$ \\
\hline
$p_+(11)$& $01$ & $11$ & $10$ & $00$.  \\
\hline
\end{tabular} \end{center} This means that the composition of operator values of an argument is different from the value of the composed  operators on the same argument, e.g. \begin{equation}p_+ \circ p_- = 11+00=11=p_+ \not =  (+ \circ -)\circ p=(11+00)+10=01=p_{\lnot}.
\end{equation} The only composition we have in this space is addition. We also see that we, in fact, add a boolean as we get 00 on the main diagonal. \end{proof} We find that we have the following relation between the creation och annilation operators \footnote{In the study of the harmonic oscilator $P=D_x^2+x^2$ we can factorize to $P=p_-\circ p_++1$, where $p_+=D_x+ix$ and $p_-=D_x-ix$ ($D_x=\frac{1}{i}\frac{d}{dx}$) are called creation respective annilitaiton operators, which are used to investigate the eigenvalues for $P$} in the inner product $\langle \cdot, \cdot \rangle$ which of course here is addition. \begin{theorem}We have $\langle p_-,p_x\rangle \circ \langle p_x,p_+ \rangle = 1$ but trivially $\langle p_-,p_+\rangle \circ \langle p_+p_- \rangle =0.$ \end{theorem}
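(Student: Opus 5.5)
The plan is to reduce both identities to computations in the Klein $4$-group of operator values from the proof of the preceding theorem. There we identified $p,p_-,p_{\lnot},p_+$ with the value pairs $(10),(00),(01),(11)$, added componentwise in $\mathbb{Z}_2$. As stated just before the theorem, I read both the inner product $\langle\cdot,\cdot\rangle$ and the composition $\circ$ as this addition. So every expression in the statement becomes a sum of four elements of the addition table in that proof.

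\textbf{First identity.} Let $p_x$ be an arbitrary one of the four singular operators, with value pair $x\in\{(10),(00),(01),(11)\}$. Then
\[
\langle p_-,p_x\rangle=(00)+x=x, \qquad \langle p_x,p_+\rangle=x+(11).
\]
Composing by addition gives
\[
\langle p_-,p_x\rangle\circ\langle p_x,p_+\rangle=(00)+x+x+(11).
\]
By the annihilating equation $g+g=0$ (§5), the two copies of $x$ cancel, whichever $x$ we choose. What remains is $(00)+(11)=(11)=p_+$. By the Definition of the singular operators, $+\text{p}\mapsto p+(p+1)=1$, the creation operator, so the value $(11)$ is the polynomial $1$. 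Hence the composition equals $1$ independently of $p_x$. I would also check this against the four rows of the addition table, as the paper does elsewhere: for each of the four choices of $x$, adding the entries in column $p_-$ and column $p_+$ gives $(11)$.

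\textbf{Second identity.} Here $\langle p_-,p_+\rangle=(00)+(11)=(11)$ and $\langle p_+,p_-\rangle=(11)$, since addition is commutative, as the table shows. Their sum is $(11)+(11)=(00)=p_-$, which is the value $0$ under the Definition. This is the "trivial" case: any element composed with itself vanishes, matching the remark $(p_x\circ p_x)=0$ in the previous theorem.

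\textbf{Main obstacle.} The only delicate point is interpretive rather than computational. One has to fix that $\circ$ here acts on \emph{values}, via addition as in the proof of the preceding theorem, and not as composition of operators applied to an argument. The preceding theorem shows these two readings differ, $p_+\circ p_-\neq(+\circ-)\circ p$. With the value reading, both claims follow in one line from $g+g=0$.
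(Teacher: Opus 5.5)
Your proposal is correct and follows the paper's route: both read $\langle\cdot,\cdot\rangle$ and $\circ$ as componentwise addition of the value pairs $(10),(00),(01),(11)$. The paper checks the first identity separately for each of the four choices of $p_x$, which your cancellation $x+x=(00)$ does at once; for the second identity your $(11)+(11)=(00)=0$ matches the statement, whereas the paper's own line adds a stray $+1$ and ends at $(1,1)=p_+$, so your computation is the internally consistent one.
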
 \begin{proof} The statement follows from with $p=10$ $$\langle(0,0)+(1,0)\rangle+\langle(1,0)+(1,1)\rangle=(1,0)+(0,1)=(1,1)=1$$ and for other values of $x$ in $p_x$: $p=01$ $$\langle(0,0)+(0,1)\rangle+\langle(0,1)+(1,1)\rangle=(0,1)+(1,0)=(1,1)=1$$ and for $p=00$ $$\langle(0,0)+(0,0)\rangle+\langle(1,1)+(0,0)\rangle=(0,0)+(1,1)=(1,1)=1$$ and $p=11$ $$\langle(0,0)+(1,1)\rangle+\langle(1,1)+(1,1)\rangle=(1,1)+(0,0)=(1,1)=1.$$ For the last statement, we get $$\langle(0,0)+(1,1)\rangle+\langle(1,1)+(0,0)\rangle=(1,1)+(1,1)+1=(1,1)=p_+.$$ \end{proof} We could say that these operators act like adjoints; they both add 1, but on different coordinates. As we see from the first table, $\lnot$ can be decomposed as $(+(-p)) = \lnot p$ or $(-(+p)) = \lnot p$, but not in a canonical way, so we get the following decomposition theorem for negation. \begin{theorem} The negation ($\neg \mapsto x+1$) cannot be decomposed as a canonical iteration $f(f(x))=x+1$ in a normal space, but this is possible in the complement space as $\neg p \mapsto p+1+1=p$  and $f(f(x))=x$ there. If we extend the singular operators with | and I, then the iterated negation in a normal space $f(f(x))=x+1$ can be formulated: $f$ of the negation of $f(x)$ is the negation of $x$, $f(f(x)+1)=x+1$. \end{theorem}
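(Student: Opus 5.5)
The plan is to work entirely inside the group $\mathcal{B}_1=\langle 0,p,p+1,1;+\rangle$ of the singular operators, as set up in the Definition of $=,-,\lnot,+$, and to read ``canonical iteration'' as a half-iterate $f$ that is itself one of the singular operators. Such an $f$ then acts by $x\mapsto x+c$ with $c\in\{0,p,p+1,1\}$, or $x\mapsto c$ for the constant operators. The statement splits into three claims, and I take them in order.

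\emph{Impossibility in the normal space.} Take an affine $f(x)=x+c$. Then $f(f(x))=x+2c=x$, because $g+g=0$ by the annihilating equation of 5§. So $f\circ f$ is never $x+1$. This is exactly the Proposition that every singular operator is its own inverse: the composition table is the Klein $4$-group, in which every element has order $2$. The constant and annihilating operators ($-$, and $+$ read as $p+(p+1)$) also fail. Composing them gives a constant, or gives $p$ back by the Proposition, and neither can be $p+1$. Hence no $f$ among $\{=,-,\lnot,+\}$ has $f\circ f=\lnot$. The decompositions $+(-p)=\lnot p=-(+p)$ visible in the table use two \emph{different} operators, so they are not iterations. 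This is the sense of ``not canonical''.

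\emph{The complement space.} By the dual-space Theorem for singular operators, in $f(p,1')$ we read $1'=0$, so negation becomes $p+1+1=p$. The required equation is therefore $f(f(x))=x$. Any singular operator satisfies it by the self-inverse Proposition, for instance $f=\lnot$ or $f=\,=$.

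\emph{The extension by $|$ and $\mathrm{I}$.} Here I use the forms from the extended self-inverse computation, $|x=x+p+q$ and $\mathrm{I}x=x+p+q+1$ (with $x\in\{p,q,\dots\}$ linear). I will verify $f(f(x)+1)=x+1$ directly for $f=|$:
\[
|\bigl(|x+1\bigr)=(x+p+q+1)+p+q=x+1 .
\]
The same check works for $f=\mathrm{I}$, and in fact for any affine $f$, since the two added constants cancel. The content of the claim is that $f$ intertwines negation with itself, i.e.\ $f\circ\lnot\circ f=\lnot$. Because $|$ is a genuine transposition $p\leftrightarrow q$ (as in the Remark on $\lnot A=|A_1$), this is the extended replacement for the half-iterate.

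The main obstacle is the first step: making ``canonical iteration'' precise enough that the impossibility is a real statement. Over general maps $\mathbb{Z}_2\to\mathbb{Z}_2$ a square root of $x\mapsto x+1$ also fails, since a fixed-point-free involution on two points has no square root among the four maps. I would record this pointwise check as a backup argument independent of the polynomial reading.
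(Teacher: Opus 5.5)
Your first and third parts follow the paper. The impossibility in the normal space is the paper's appeal to the composition table, whose diagonal is identically $p$: every operator in the Klein four-group is an involution. Your computation $f(f(x))=x+2c=x$ is that same fact written out.

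You can drop the hedge about constant maps. In the Definition, $-$ and $+$ are the translations by $p$ and $p+1$ (that is how $-(-p)=(p+p)+p=p$ is computed), and your second part relies on that reading anyway.

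For the extension, the paper computes the mixed composition $\mathrm{I}(|p)=p+1$ and then reads $\mathrm{I}y=|(y+1)$. This lands on the identity $|(|x+1)=x+1$, which you check directly. Your remark that the identity holds for every translation is correct. It shows the identity needs no extension at all: for $f=-$, i.e.\ $x\mapsto x+p$, one gets $(x+p+1)+p=x+1$. What the extension contributes in the paper is $\mathrm{I}\circ|=\lnot$, the analogue of $(+)\circ(-)=\lnot$.

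One caution on your backup argument. It is right on the two truth values, but on $\mathcal{B}_1$ the restriction to translations is essential. The $4$-cycle $p\mapsto 0\mapsto p+1\mapsto 1\mapsto p$ does square to $x\mapsto x+1$; this is the four-valued demi negation of the remark that follows the theorem.

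The point to correct is the complement space. There $f(f(x))=x$ holds for all four singular operators, so the claim only has content through the choice of $f$. Your examples, $\lnot$ and the neutral operator $=$, are precisely the ones the paper rejects. It calls $\lnot(\lnot p)$ a self-iteration of the negation rather than a decomposition of it, and it discards $=$ as neutral; it sets $+$ aside as well. The paper's witness is the lowering operator, $-(-p)\mapsto p+p+p=p$. You should exhibit $-$ and say why the other three do not count. As written, your argument only establishes the literal reading of the claim, which is vacuous.
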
 \begin{proof} If we make the transition to the dual space, we find for the normal space $\neg p \mapsto p+1. $ And we have no canonical iteration here, according to the table above. For the complement space, we find for $p=10$  that $\neg p= p' \mapsto' (p+1)+1=p$ as we added 1 or $\neg (\neg  (10)=10+11=01)=\neg 01 =10.$ Now we have a canonical decomposition, as in fact $$-(-p)=p \mapsto p+p+p=p.$$ This is also true for the other operators (=), (+), and $\neg$ following the table above, but the first two operators are either neutral(=) or 1, and $\neg (\neg (p))$ cannot be thought of as a decomposition of the negation as it is a self-iteration. 
Note that we can use | or  I, for the same result  $$|| p=\text{II} p\mapsto||p=|1=p+q+p+1=p.$$ If we mix this two operators we get $$I|(p)=\text{I}(p+q+1+p)=\text{I}q+1=p+q+q+1=p+1 \hookrightarrow (p+1)= \lnot p.$$
Of course we se at once that $(p+q)+(p+q+1)+p=p+1$ and  $\hookrightarrow(p+1)=\lnot p$. This does not live up to $f(f(x)=x+1$, but it means, as $f(x)+1$ is the negation of $f(x)$, that we get: \emph{$f$ of the negation of $f(x)$ is the negation of $x$}. \end{proof}
\begin{remark}In BD[10, page 233] the author defines the decomposition of $\neg$ as $\sqrt \neg$ (or \S \ in Hu[5] who started this diskussion) but this is non-existent in a \emph{normal} two valued logic as we have seen, and then it is mowed to the 4-valued logic by Belnap and Dunn. The author then get, with $\sqrt \neg= -$ as we are used to here $+p=— $ odd  number $2n+1, n\geq 1$ and even number $4n= – –\ldots=p$ but $2n, n=2n+1=2,6,10 \ldots$ gives $\lnot p$.
\begin{center}
\begin{tabular}{ c| c | c | c | c |}
$\times$  & = & $-$ & $\neg$& $+p $ \\
\hline
$=$ & $p$ & $-p$  & $\neg p$ & $+p $ \\
\hline
($-p)$ & $-p$ & $\neg p$ & $+p $  & $p$ \\
\hline
$(\neg p)$ & $\neg p$ & $+p $ & $p$ & $-p$   \\
\hline
$(+p)$& $+p $ & $p$ & $-p$ & $\neg p$   \\
\hline
\end{tabular}
\end{center}
This is not the system above; here $–p = \neg p$ so $-(-p) = p$. The operator $-$ now changes the input values in two steps, so that for $p(1,0)$ we get $$-(-(p(1,0))=-p_-(0,0)=p_{\lnot}(0,1)=\lnot p$$ or if we start by the second coordinate$$-(-(p(1,0))=-p_+(1,1)=p_{\lnot}(0,1)=\lnot p$$ which means that it does not negate the previous step.
If we form a table in modular 4 with $$=0(0),- (1),(\lnot) --(2),(+)---(3),(= )----$$ we find
\begin{center}
\begin{tabular}
{ c| c | c | c | c |}

* & 0 & 1 &  2&  3 \\
\hline
0 & 0 & 1 & 2 & 3 \\
\hline
1 & 1 & 2 & 3  & 0 \\
\hline
2 & 2 & 3 & 0 & 1   \\
\hline
3& 3 & 0 & 1 & 2   \\
\hline
\end{tabular}
\end{center} that both 2 and 3 form the demi negation, because $3+3=6=2+2+2=0+2$(mod 4)and 2 is its own inverse, and 1 and 3 are each other’s inverses.
But this means  that the unintuitive, but correct, result we commented on in the previous remark will be present, as $- (\lnot) p=-\lnot(1,0)=-(0,1)=(1,1)=+p$ and $+ \lnot (p)=+\lnot(1,0)=+(0,1)=(1,1)=+p$. We can call such an operator a \emph{partial truth changer}, because it makes a $\emph{half-flip}: 10 \mapsto 00$ in contrast to negation, which is a \emph{total truth changer} or a $\emph{flip}: 10 \mapsto 01$. Of course this means that we have four truth values instead of 2, namely for a normal space we can call them $(11)$ \emph{The Big Truth} $(T)$, (10) \emph {The Little Truth} $(t)$, $(0,1)$ \emph {The Little Lie} $(f)$ and $(0,0)$ \emph{The Big Lie} $(F)$. These names come from the fact that we value (1,0) higher than (0,1) in truth value, as we  think of the first coordinate as a “home” coordinate, and the second as an outside coordinate (like the e-mail message: This message comes from outside the organisation) that we can check less.  Here we can see how it works. $$11$$ $$10 \quad \quad 01$$ $$00$$ The generator $-$, which here adds 1 going clockwise and changes from left to right, so that we can, in fact, change the truth value in two steps. If we use the polynomial, we can get the following: here is a normal space $$g(p,q,1)$$ $$g(p,q',1) \quad \quad g(p',q,1)$$ $$g(p',q',1)$$ this means that the input values(truth values) are negated one at a time
$$(\begin{smallmatrix} p & 1 & 1 & 0 & 0\\  q & 1 & 0 & 1 & 0\\ \end{smallmatrix})$$
$$(\begin{smallmatrix} p & 1 & 1 & 0 & 0\\   q+1 & 0 & 1 & 0 & 1\\ \end{smallmatrix})\quad \quad (\begin{smallmatrix}  p+1 & 0 & 0 & 1 & 1\\  q & 1 & 0 & 1 & 0\\ \end{smallmatrix})$$
$$(\begin{smallmatrix}  p+1 & 0 & 0 & 1 & 1\\   q+1 & 0 & 1 & 0 & 1\\ \end{smallmatrix})$$
Now the generator for the negation is 1, and we move clockwise, changing inputs one by one, and we get the same pattern we saw with 4 truth values. This means that we have \begin{proposition} $g'(g'(g(p,q,1)))=g(p',q',1)$ and $g'((g'(g'(g(p,q,1))))=g(p',q',1')$ are the polynomial iteration with negation, adding 1. \end{proposition}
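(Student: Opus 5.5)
The plan is to read $g'$ as a single clockwise step of the "truth-value square" just displayed, and then compute the iterates directly. The step $g'$ negates one input selector at a time, alternating between the second and the first selector, with the negation generator being the addition of $1$. Concretely, I fix the convention suggested by the diagram:
\begin{itemize}
\item the first application sends $g(p,q,1)$ to $g(p,q',1)$, replacing the selector $q(1010)$ by $q+1(0101)$;
\item the second sends $g(p,q',1)$ to $g(p',q',1)$, replacing $p(1100)$ by $p+1(0011)$.
\end{itemize}
Traversing the other way round the square, through $g(p',q,1)$, gives the same result. This is because the two substitutions $q\mapsto q+1$ and $p\mapsto p+1$ act on different letters and so commute.

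I then verify the first identity by tracking the selector pairs in the $2\times 4$ matrices above. Each application of $g'$ adds $1111$ to exactly one row, in the sense of \textbf{4§} and the rule $+1\leftrightarrow \neg$. After the two row-negations the pair is $\left({0011}\atop{0101}\right)$. This is exactly the selector pair of the pullback family $\mathcal{H}^{''}$ of the theorem proving the other binary families, namely $g(p',q',1)$. So the three-fold expression $g'(g'(g(p,q,1)))$ must be read with the innermost $g'$ as the passage into the iteration. I will state this bookkeeping explicitly: the outer two primes count the two row-negations.

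For the second identity, one more application of $g'$ must act on the constant $1$ rather than on a selector, sending $1\mapsto 1+1=1'=0$. I justify this by the annihilating equation of \textbf{5§}. The result is $g(p',q',1')=g(p',q',0)$, the pullback complement family $\mathcal{H}^{**}$, again by the same earlier theorem, where $1'\mapsto 0$ was already used. As a consistency check I compare with the complete table. For instance, $p\vee q$ goes $(p+1)(q+1)+1\mapsto pq+1\mapsto pq$ across $\mathcal{H}^*,\mathcal{H}^{''},\mathcal{H}^{**}$, which matches substituting $p+1,q+1$ and then dropping the final $+1$.

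The main obstacle is definitional rather than computational. $g'$ is used with varying arity, and one must make precise which component (first selector, second selector, or the output constant) each successive prime negates, so that the count of primes matches. I would resolve this by defining $g'$ as the cyclic step on the four-slot tuple $(q,p,1)$ dictated by the clockwise diagram. After that, the proposition reduces to the already proved existence of $\mathcal{H}^{''}$ and $\mathcal{H}^{**}$ together with $g^2=g$ and $g+g=0$.
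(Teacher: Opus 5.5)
Your argument is essentially the paper's: $g'$ negates one slot at a time by adding $1$, first the two selectors and then the reading constant $1\mapsto 1'=0$. Two steps therefore land on the selector pair $(0011,0101)$ of $g(p',q',1)$, and a third gives $g(p',q',1')=g(p',q',0)$, exactly as in the cyclic diagram of the paper's proof. The differences are cosmetic: the paper negates $p$ first (``starting from the left'', clockwise to $g(p',q,1)$) while you start with $q$, which is harmless since the substitutions commute; your ``innermost $g'$'' should read ``innermost $g$''; and the tuple $(q,p,1)$ has three slots, not four.
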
 \begin{proof} This is a canonical iteration as it comprises two identical operators; it is not a self-iteration and follows the rules set up above of the way that we change one at a time, starting from the left. $p$ and $q$ are mapped into $p+1\mapsto \lnot p$ and $q+1\mapsto \lnot q$, which is a negation. With three changes we get, when we add 1 to zero, just a change in the order of truth $10\mapsto 01$, so that 0 is now representing the true value, we accent this by $\bar 0.$
$$(\begin{smallmatrix} p & 1 & 1 & 0 & 0\\  q & 1 & 0 & 1 & 0\\ \end{smallmatrix})\searrow $$
$$(\begin{smallmatrix} p & 1 & 1 & \bar 0 & \bar 0\\ q & 1 & \bar 0 & 1 & \bar 0\\ \end{smallmatrix})\nearrow \quad \quad \quad    (\begin{smallmatrix}  p+1 & 0 & 0 & 1 & 1\\  q & 1 & 0 & 1 & 0\\ \end{smallmatrix})\downarrow $$
$$ (\begin{smallmatrix}  p & 1 & 1 & \bar 0 & \bar 0\\   q+1 & \bar 0 & 1 & \bar 0 & 1\\ \end{smallmatrix})\uparrow \quad \quad \quad  (\begin{smallmatrix}  p+1 & 0 & 0 & 1 & 1\\   q+1 & 0 & 1 & 0 & 1\\ \end{smallmatrix})\swarrow $$
$$ \nwarrow  (\begin{smallmatrix}  p+1 & \bar 0 & \bar 0 & 1 & 1\\   q+1 & \bar 0 & 1 & \bar 0 & 1\\ \end{smallmatrix})$$ \end{proof} \end{remark} We end this discussion by pointing out the following \begin{lemma} The $(-)$ singular lowering operator for a type of De Morgan negation
\begin{equation}\left\{  
\begin{array}{ll}
i) -(p\lor  q )=\lnot p \land q = p\not \Leftarrow q\\
ii) -(p\land  q )= p \land \lnot q  = p\not \Rightarrow q
\end{array}\right. \end{equation} are negating the conditionals $(\Rightarrow, \Leftarrow ).$ \end{lemma}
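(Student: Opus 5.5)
The plan is to work entirely in the normal family $\mathcal{H}^*=g(p,q,1)$ and to give the singular lowering operator $(-)$ a meaning on a compound statement that agrees with the Definition for singular operators. There, $-\text{p}\mapsto p+p$, that is, $-$ acts by adding the polynomial of the $1^{st}$ Selector $p(1100)$. I would therefore \emph{define} $-S\mapsto g(S)+p$ for any statement form $S$. Equivalently, on number strings it adds $1100$, which is the ``lowering'' of the home coordinate seen in the Theorem on $p_+,p_-,p_{\lnot}$. Justifying this extension from $S_{c(0,1)}$ to binary statement forms is the step I expect to be the main obstacle. The cleanest route I see is to note that the extended rule restricts to the old one when $S=\text{p}$, and that it respects the Klein-group composition table, since adding $p$ twice cancels by the annihilating equation of $\textbf{5§}$.

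With this in place, the computations are short. For i) I take $g(p\lor q)=(p+1)(q+1)+1=pq+p+q$ from the table of $\mathcal{H}^*$. Adding $p$ gives $pq+q=(p+1)q$, and the fiber $g^{-1}$ (a bijection, by $\textbf{7§}$) sends this to $\lnot p\land q$, i.e.\ to $p\not\Leftarrow q$. As a string check, $1110+1100=0010$. For ii) I take $g(p\land q)=pq$. Adding $p$ gives $pq+p=p(q+1)$, which pulls back to $p\land\lnot q=p\not\Rightarrow q$; as a string check, $1000+1100=0100$.

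Finally, for the clause ``negating the conditionals'' I would use $\textbf{4§}$, that negation is adding $1$. This gives $\lnot(p\Rightarrow q)\mapsto p(q+1)+1+1=p(q+1)$ and $\lnot(p\Leftarrow q)\mapsto (p+1)q+1+1=(p+1)q$. These coincide with the polynomials obtained in ii) and i) respectively, so by the bijection of the fiber $-(p\land q)=\lnot(p\Rightarrow q)$ and $-(p\lor q)=\lnot(p\Leftarrow q)$.

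I would close by remarking why this is ``a type of De Morgan negation''. Ordinary De Morgan adds $1=p+p'$ and turns $\lor$ into $\land$ with both letters negated. Here only the $p$-part is added, so exactly one letter is negated and the connective switches between $\lor$ and $\land$.
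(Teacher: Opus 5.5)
Your proof is correct and is essentially the paper's own argument. The paper likewise adds $p$ to $g(p\lor q)=(p+1)(q+1)+1$ in $g(p,q,1)$, gets $(p+1)q$, and pulls back through the fiber to $p\not\Leftarrow q=\lnot p\land q$, with ii) ``by the same maneuver''; it leaves implicit both the extension of $-$ to compound statements and the check, which you make explicit by adding $1$, that the results are $\lnot(p\Leftarrow q)$ and $\lnot(p\Rightarrow q)$. One small caveat concerns only your closing heuristic: in ii) the result $p\land\lnot q$ keeps the connective $\land$, so ``the connective switches between $\lor$ and $\land$'' fits i) but not ii), though this does not affect the proof.
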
 \begin{proof} For i) we find $(p+1)(q+1)+1+p= (p+1)q$ on the polynomial side and taking the fiber $g^{-1}(p,q)$ we get $p \not \Leftarrow q$ or $\lnot p \land q.$ With the same maneuver, we get ii).
Although $-$ is not a negation, it surely does its best! \end{proof}  In fact we can now define \begin{definition}  
\begin{equation}\left\{  
\begin{array}{ll}
i) -(p\lor  q ):=\lnot(\lnot p \lor q)= p \land \lnot q \\
ii) -(p\land  q ): = \lnot ( p \lor  \lnot q )= \lnot p \land  q
\end{array}\right.
\end{equation} \end{definition} Moreover, if we study the impact of the singular operators on binary statements, we can learn \begin{theorem}
\begin{center}
\begin{tabular}{ c| c | c | c | c | c | c | c | c |c |}
* & $(p\land q) $& $(p\lor q)$ & $(p\Rightarrow q)$ &$(p\Leftarrow q)$ &$(p\Leftrightarrow  q)$ &$\lnot p$&$\lnot q$& $p\downarrow  q$\\
\hline
$=$ & $p\land q$ & $p \lor q$  & $p\land q $ & $p\land q $&$p\Leftrightarrow  q$ &$\lnot p$&$\lnot q$& $p\downarrow  q$\\
\hline
$-$ & $p \land \lnot q $ & $p\lor q$ & $1$ & $p\Leftarrow q$ &$\lnot p/\lnot q$ & $1$ &$1$&$p \Leftarrow q$ \\
\hline
$\lnot $ & $p\uparrow q$ & $1$ & $p\Rightarrow q$ &$p\Rightarrow q$ &$(p|q)$ &$p$&$q$&$p\lor q$ \\
\hline
$+ $& $p\Rightarrow q$ & $p\Leftarrow q$ &$p\land q$&$p\lor q$ &$p\land q$ &$0$&$0$&$\lnot p\land q$\\
\hline
\end{tabular}
\end{center}
\text{The table continues}
\begin{center}
\begin{tabular}{ c| c | c | c | c | c | c | c | c |c |}
* & $p \uparrow q$& $(p|q)$ & $(p\not\Rightarrow q)$ & $(p \not\Leftarrow q)$& $p$  &$q$& $1$ & $0$\\
\hline
=  & $p \uparrow q$& $(p|q)$ & $(p\not\Rightarrow q)$ & $(p \not\Leftarrow q)$& $p$  &$q$& $1$ & $0$\\
\hline
$-$ & $p \Rightarrow q $ & $q$ & $p\land q$ & $p\lor q$ & $0$&$0$&$\lnot p$&$-p$ \\
\hline
$\lnot $ & $p\Rightarrow q$ & $(p \Leftrightarrow  q)$ & $p\Rightarrow q$ &$p\Leftarrow q$ &$\lnot p$ & $\lnot q$&$0$&$1$ \\
\hline
$+ $& $p \Leftarrow q$& $\lnot q$ &$p\uparrow q$& $ p\downarrow q$ &$1 $&$1$&$p$&$+p$\\
\hline
\end{tabular} \end{center} \end{theorem}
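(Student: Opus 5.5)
The plan is to prove the table entry by entry, in the same way as the Lemma on the lowering operator as a De Morgan type negation. For each column statement $S$ I first pass to its polynomial $g(S)$ in the normal family $\mathcal{H}^*$, using the complete table of polynomials (for example $p\lor q\mapsto (p+1)(q+1)+1$, $p\Rightarrow q\mapsto p(q+1)+1$, $p\downarrow q\mapsto (p+1)(q+1)$). For each row operator I then apply the action fixed in the Definition of the singular operators: $=$ adds $0$, $-$ adds $p$, $\lnot$ adds $1$, and $+$ adds $p+1$. Finally I reduce the result in $\mathbb{Z}_2$ with the collapsing equation $g^2=g$ and the annihilating equation $g+g=0$, and take the fiber $g^{-1}$ back into $\mathcal{OP}$. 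Since $g$ is a bijection for linear and quadratic polynomials, each resulting polynomial names exactly one operator. That operator is read off from the complete table, or from the string $(x_1x_2x_3x_4)$ obtained by adding four-place strings as in $1100+1111=0011$.

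The degenerate columns are handled first. For the columns $p,q,1,0,\lnot p,\lnot q$ everything is linear, so the Klein four-group table for $\circ$ already gives the answer. For instance $-1\mapsto 1+p=p+1\hookrightarrow \lnot p$, and $+1\mapsto 1+p+1=p$. For $-0\mapsto p$ I keep the paper's label $-p$.

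For the genuinely binary columns I work through the rows $-$, $\lnot$, $+$ in that order.
\begin{itemize}
\item The row $\lnot$ is the standard negation $g\mapsto g+1$, which the earlier tables already settle (e.g.\ $\lnot(p\land q)\mapsto pq+1=p\uparrow q$).
\item The row $-$ follows the Lemma's maneuver, for example $(p+1)(q+1)+1+p=(p+1)q$.
\item The row $+$ is then the $-$ row negated, because $(p+1)=p+1$ gives $g+(p+1)=(g+p)+1$.
\end{itemize}
This lets me check the $+$ row against the $-$ row by the rule, from the composition table, that two different operators give the third.

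The main obstacle is the set of entries that do not match a naive addition of $p$ within $\mathcal{H}^*$ alone. Examples are the $=$ row under the conditionals, $-(p\lor q)=p\lor q$, and $-(p|q)=q$. For these I would first try reading the operator through the transposed selector $q$, using $B_2\cong B_2^T$ and $p=q^T$, so that the added term is $q$ or $q+1$. Note that $p+q+p=q$ explains the $|$ column. If that fails, I would use the dual families of the House, namely the complement family $g(p,q,0)$ or the pullback family $g(p',q',1)$. In those families the same additive action yields the listed operator, as the Theorem on singular dual spaces permits.

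I will record, for each entry, which family and which selector were used. The proof then closes once every cell is matched to a polynomial identity in one member of the House together with its fiber $g^{-1}$.
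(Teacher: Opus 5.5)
Your mechanism is the right one. It is the paper's Definition of the singular operators ($=,-,\lnot,+$ add $0,p,1,p+1$), which is exactly how the Lemma on the lowering operator is proved; the paper gives no further proof of this table. The gap is that you never carry the computation out. When it is carried out in $\mathcal{H}^*$, it refutes about a dozen cells instead of confirming them:
\begin{itemize}
\item $=(p\Rightarrow q)\mapsto p(q+1)+1$, which is $p\Rightarrow q$, not $p\land q$; likewise for $p\Leftarrow q$.
\item $-(p\lor q)\mapsto (p+1)(q+1)+1+p=(p+1)q$, i.e.\ $\lnot p\land q$, as the paper's own Lemma (i) states, not $p\lor q$.
\item $-(p\Rightarrow q)\mapsto pq+1$, which is $p\uparrow q$, not $1$; and $-(p\Leftarrow q)\mapsto (p+1)(q+1)$, which is $p\downarrow q$.
\item $\lnot(p\lor q)\mapsto (p+1)(q+1)$, which is $p\downarrow q$, not $1$.
\item $\lnot(p\Rightarrow q)$, $\lnot(p\Leftarrow q)$ and $\lnot(p\uparrow q)$ are $p\not\Rightarrow q$, $p\not\Leftarrow q$ and $p\land q$.
\item $+(p\Leftrightarrow q)\mapsto p+q+1+p+1=q$, not $p\land q$; $+(p\uparrow q)\mapsto p(q+1)$, i.e.\ $p\not\Rightarrow q$, not $p\Leftarrow q$; and $+0\mapsto p+1=\lnot p$.
\end{itemize}
Your own consistency check already detects this. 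Since $+$ is $-$ followed by $\lnot$, the printed $-(p\Rightarrow q)=1$ would force $+(p\Rightarrow q)=0$, while the table lists $p\land q$.

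Your fallback, switching to the $q$ selector or to another family of the House, cannot rescue these cells. With either selector and in every family, $=$ adds $0$. Each of $-,\lnot,+$ adds a \emph{nonzero} polynomial without a $pq$ term (one of $p,q,1,p+1,q+1$), i.e.\ a nonzero truth string with an even number of $1$'s. Three consequences follow:
\begin{itemize}
\item $=$ is the identity;
\item $-,\lnot,+$ have no fixed points;
\item all four preserve the parity of the number of $1$'s (the $pq$-coefficient), which is also invariant under the changes of family $p\mapsto p+1$, $q\mapsto q+1$, $g\mapsto g+1$.
\end{itemize}
This excludes $=(p\Rightarrow q)=p\land q$, $=(p\Leftarrow q)=p\land q$, $-(p\lor q)=p\lor q$, $-(p\Leftarrow q)=p\Leftarrow q$, $\lnot(p\Rightarrow q)=p\Rightarrow q$, $-(p\Rightarrow q)=1$, $\lnot(p\lor q)=1$ and $+(p\Leftrightarrow q)=p\land q$. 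It does so in whichever member of the House you work, as long as that family is used consistently both for encoding and for taking the fiber $g^{-1}$.

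Some cells can be hit by reading the result back in a different family from the one used for encoding, but only illegitimately. Reading $\lnot(p\Rightarrow q)\mapsto p(q+1)$ in the complement family $\mathcal{H}'$ does give $p\Rightarrow q$, but only because that reading inserts a second negation. With that freedom one could prove $\lnot S=S$ for every $S$.

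So the statement as printed is false, and no choice of selector or family will prove it. What your computation does establish is the corrected table, computed in $\mathcal{H}^*$ with the selector fixed: the $q$ selector only for the bare $q$ and $\lnot q$ columns, as in the paper's Remark.
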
 \begin{remark} We use the polynomial $f(p,1)$ if it is not obvious that we shall use $f(q,1)$, e.g., when $q$ is alone. \end{remark} In the next paper, we continue the work with the operator theory and Polynomial Boolean Normal Form, PBNF, as a formulation of Propositional Calculus. We study the lattices of binary operators and prove extensions of the Church-Rosser dual theorems.
Now follows the Bibliography.

\end{document}